\theoremstyle{plain} 
\newtheorem{theorem}{Theorem}[section]
\newtheorem{proposition}[theorem]{Proposition}
\newtheorem{lemma}[theorem]{Lemma}
\newtheorem{corollary}[theorem]{Corollary}
\theoremstyle{definition}
\newtheorem{definition}[theorem]{Definition}
\newtheorem{example}[theorem]{Example}
\newtheorem{conjecture}[theorem]{Conjecture}
\theoremstyle{remark}
\newtheorem{remark}[theorem]{Remark}
    \tikzset{vert/.style={circle, draw=black!100,fill=black!100,thick, inner sep=0pt, minimum size=1mm},
                 empty/.style={fill=none, draw=none, minimum size=0mm},
                 square/.style={rectangle, draw=red,  minimum width=4mm, minimum height=4mm, fill=none},
                 over/.style={white, ultra thick, double=black, double distance = .4pt} }
\tikzset{snake/.style={decorate, decoration=snake}}
\newcommand{\clipbox}[4]{
  \clip (#1,#2) -- (#3,#2) -- (#3,#4) -- (#1,#4) -- (#1,#2);
  \draw[dashed]  (#1,#2) -- (#3,#2); 

}
\newcommand{\diagTop}[2]{\begin{scope}[xshift=#1cm, yshift=#2cm]
    \draw (1,0) -- (2,0); 
    \draw (3,0) -- (4,0); 
    \draw (0,0) arc(180:0:2.5cm and 1cm);
    \end{scope}}
\newcommand{\diagBot}[2]{\begin{scope}[xshift=#1cm, yshift=#2cm]
    \draw (1,0) -- (2,0); 
    \draw (3,0) -- (4,0); 
    \draw (0,0) arc(-180:0:2.5cm and 1cm);
    \end{scope}}
\newcommand{\pCr}[2]{ \begin{scope}[xshift=#1cm, yshift=#2cm]
       \draw (0,1) arc(180:240:.5cm);\draw (1,0) arc(0:60:.5cm); 
       \draw (1,1) arc(0:-90:.5cm); \draw (0,0) arc(180:90:.5cm); 
       \end{scope} }
\newcommand{\pCrr}[2]{ \begin{scope}[xshift=#1cm, yshift=#2cm]
      \draw (0,1) arc(180:270:.5cm);\draw (1,0) arc (0:90:.5cm);
       \end{scope} }
\newcommand{\ol}[3]{
 \draw(#1,{#2+.1}) arc (180:270:.1cm); 
 \draw[over] (1.1,#2) -- (#3,#2);
}
\newcommand{\smcr}[2]{
 \begin{scope}[xshift=#1cm, yshift=#2cm-.5cm]
 \begin{scope}[yshift=.165cm, yscale=2/3 ]
 \draw (0,.5) to[out=0, in=-115] (.30,.75) to[out=65, in=180]  (.5,1) to[out = -15, in =70] (.6,.6) to[out = 35, in = 35-180] (.4,.4) to[out = 70-180, in=-15-180]  (.5,0) to[out=180-180, in=65-180] (.75,.3) to[out = -115-180 ,in = 0-180] (1,.5)  ;
        \draw[red] (.4,.4) -- (.30,.75); 
        \draw[red] (.6,.6) -- (.75,.30); 
  \end{scope}
  \draw[red] (.5,.835) -- (.5,1);
  \draw[red] (.5,.165) -- (.5,0); 
 \end{scope} 
}
\newcommand{\scc}[3]{
\draw(#1,#2) arc (#3*90:(#3*90)-90:.1cm);}
\newcommand{\sqp}[5]{
  \scc{#1-.1}{#2}{1}
  \draw(#1, #2-.1) -- (#1,  #3 +.1);
  \scc{#1}{#3+.1}{4}
  \draw(#1-.1,#3) -- (#4+.1,#3);
  \scc{#4+.1}{#3}{3}
  \draw(#4,#3+.1) -- (#4, #5);
}
\newcommand{\old}[3]{
 \draw(#1,#2-.1) arc (180:90:.1cm); 
 \draw[over] (#1+.1,#2) -- (#3,#2);
}
\newcommand{\pCh}[3]{\foreach \y in {#2,...,#3}{\pCr{#1}{\y};}}  
\newcommand{\nCr}[2]{ \begin{scope}[xshift=#1cm, yshift=#2cm]
       \draw (0,1) arc(180:270:.5cm);\draw (1,0) arc(0:90:.5cm); 
       \draw(1,1) arc(0:-60:.5cm); \draw(0,0) arc(180:120:.5cm); 
       \end{scope} }
\newcommand{\nCh}[3]{\foreach \y in {#2,...,#3}{\nCr{#1}{\y};}}
\newcommand{\nsj}[2]{ \begin{scope}[xshift=#1cm, yshift=#2cm]
       \draw (0,0) -- (0,1); \draw (1,0) -- (1,1); 
       \draw[red] (0,.5) -- (1,.5); 
       \draw[red] (0,.25) edge[bend left = 90] (0,.75);
       \draw[red] (1,.25) edge[bend right = 90] (1,.75);
 
       \end{scope} }
 \newcommand{\Verts}[2][,]{\foreach \i/\x/\y in {#2}{\draw (\x#1\y) node (\i){};}}
 \newcommand{\Edges}[2][black]{\foreach \i/\j in {#2}{\draw (\i) edge[#1] (\j);}}
\begin{document}

\title{On geometric realizations of the extreme Khovanov homology of pretzel links}

\author{Jinseok Oh}
\address{Department of Mathematics, Kyungpook National University, Daegu, 41566, Republic of Korea}
\email[Jinseok Oh]{jinseokoh@knu.ac.kr}

\author{Mark H. Siggers}
\address{Department of Mathematics, Kyungpook National University, Daegu, 41566, Republic of Korea}
\email[Mark H. Siggers]{mhsiggers@knu.ac.kr}

\author{Seung Yeop Yang}
\address{KNU G-LAMP Project Group, KNU Institute of Basic Sciences, Department of Mathematics, Kyungpook National University, Daegu, 41566, Republic of Korea}
\email[Seung Yeop Yang]{seungyeop.yang@knu.ac.kr}

\author{Hongdae Yun}
\address{Department of Mathematics, Kyungpook National University, Daegu, 41566, Republic of Korea}
\email[Hongdae Yun]{yyyj1234@knu.ac.kr}

\subjclass{}%

\begin{abstract}
Gonz\'alez-Meneses, Manch\'on, and Silvero showed that the (hypothetical) extreme Khovanov homology of a link diagram is isomorphic to the reduced (co)homology of the independence simplicial complex of its Lando graph. Przytycki and Silvero conjectured that the extreme Khovanov homology of any link diagram is torsion-free. In this paper, we investigate explicit geometric realizations of the real-extreme Khovanov homology of pretzel links. This gives further support for the conjecture.
\end{abstract}

\keywords{Khovanov homology, pretzel link, geometric realization, homotopy type.}

\subjclass[2020]{Primary: 57K10, 57K18. Secondary: 57M15.}

\maketitle

\section{Introduction}

Khovanov homology \cite{Kho00}, as a categorification of the Jones polynomial, is a powerful invariant of knots and links. Various attempts \cite{Chm05, ET14} have been made to establish geometric realizations of Khovanov homology. Ultimately, Lipshitz and Sarkar \cite{LS14} constructed spectra $\mathcal{X}_{Kh}^{j}(L)$ whose reduced singular cohomology is isomorphic to the Khovanov homology of a link $L.$
Meanwhile, Gonz\'alez-Meneses, Manch\'on, and Silvero \cite{GMS18} established a concrete geometric realization for the (hypothetical) extreme Khovanov homology $KH^{i,j_{\rm min}}(D_L)$ of $L$ by demonstrating an isomorphism between $KH^{i,j_{\rm min}}(D_L)$ and the reduced cohomology of the independence simplicial complex of a special graph obtained from $D_L,$ called the Lando graph. 
It was proven by Cantero Mor\'an and Silvero \cite{CS20} that the spectrum constructed by Gonz\'alez-Meneses, Manch\'on, and Silvero \cite{GMS18} is stably homotopy equivalent to the one introduced by Lipshitz and Sarkar \cite{LS14} at its extreme quantum grading.
Przytycki and Silvero \cite{{PS18,PS20}} extended the results and proposed the following conjecture:

\begin{conjecture} \cite{{PS18}} \label{conjecture}
The independence simplicial complex associated with a circle graph is homotopy equivalent to a wedge of spheres. In particular, the extreme Khovanov homology of any link diagram is torsion-free.
\end{conjecture}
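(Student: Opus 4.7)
The plan is to attempt Conjecture~\ref{conjecture} by induction on the number of chords of the diagram representing $G$. The class of circle graphs is closed under both vertex deletion $G \setminus v$ and closed-neighborhood deletion $G \setminus N[v]$, since removing a chord (together with all chords crossing it) leaves a valid chord diagram. For any $v \in V(G)$ one has the standard decomposition
\[
I(G) \;=\; I(G \setminus v) \;\cup\; \bigl(v \ast I(G \setminus N[v])\bigr),
\]
where the two pieces intersect in $I(G \setminus N[v])$. Because the join $v \ast I(G \setminus N[v])$ is contractible, this realizes $I(G)$ as the homotopy cofiber of the inclusion $\iota \colon I(G \setminus N[v]) \hookrightarrow I(G \setminus v)$, and inductively both the domain and codomain of $\iota$ are wedges of spheres.

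The first order of business is to exploit the fold lemma for independence complexes: whenever a vertex of $G$ has a dominating neighbor, the complex folds to that of a strictly smaller circle graph and the induction closes immediately. The next step would be to show that any chord diagram without such a folding configuration always contains a chord $v$ for which the attaching map $\iota$ is null-homotopic --- natural candidates being an ear (a chord crossing exactly one other) or a pair of parallel chords --- in which case the cofiber is simply $I(G \setminus v) \vee \Sigma I(G \setminus N[v])$, visibly a wedge of spheres.

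The main obstacle is precisely that a cofiber of wedges of spheres need not itself be a wedge of spheres: nontrivial attaching maps can produce higher homotopy (for instance, attaching a $4$-cell to $S^2$ via the Hopf map yields $\mathbb{CP}^2$). To bypass this I would pursue discrete Morse theory in parallel, following Chari--Forman: construct an acyclic matching on the face poset of $I(G)$ whose critical cells concentrate in the dimensions expected of a wedge of spheres. The cyclic order of chords around the circle provides a natural indexing along which such a matching may be defined greedily, paralleling strategies that have succeeded for independence complexes of forests and chordal graphs. Once the wedge-of-spheres homotopy type is established, the torsion-free consequence for the extreme Khovanov homology is immediate from the Gonz\'alez-Meneses--Manch\'on--Silvero isomorphism with the reduced cohomology of the Lando-graph independence complex.
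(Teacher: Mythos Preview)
The statement you are attempting to prove is a \emph{conjecture}, not a theorem: the paper does not prove it. The paper merely cites it from \cite{PS18} and then provides supporting evidence by verifying, for specific families of pretzel link diagrams, that the relevant independence complexes are wedges of spheres (Proposition~\ref{main0}, Theorems~\ref{main1} and~\ref{main2}). There is therefore no ``paper's own proof'' to compare against.

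Your proposal is not a proof but a strategy sketch, and you yourself identify the gap. The inductive framework via the cofiber sequence
\[
I(G\setminus N[v]) \hookrightarrow I(G\setminus v) \to I(G)
\]
is standard, and it is true that both smaller complexes are inductively wedges of spheres. But, as you note, a cofiber of a map between wedges of spheres need not be a wedge of spheres; controlling the attaching map $\iota$ up to homotopy is exactly the hard part, and nothing in your outline does this. The claim that ``any chord diagram without a folding configuration always contains a chord $v$ for which $\iota$ is null-homotopic'' is unproven and is essentially a restatement of the conjecture. Likewise, the discrete Morse theory paragraph is a hope rather than an argument: you have not exhibited an acyclic matching, nor explained why the cyclic order should yield one whose critical cells have the required pattern. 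Matchings that work for forests and chordal graphs rely on simplicial elimination orders that circle graphs do not in general possess.

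In short: the conjecture remains open, the paper does not claim otherwise, and your outline does not close the known obstruction.
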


In this paper, we construct explicit geometric realizations of the real-extreme Khovanov homology of pretzel links by searching for suitable link diagrams. Furthermore, we show that the homotopy types of these geometric realizations are wedges of spheres. This is in line with above conjecture.

\section{Khovanov homology}
We review the definition of Khovanov homology based on \cite{Vir04}. Let $D_L$ be a link diagram of an oriented link $L.$ Let $p$ and $n$ be the numbers of positive and negative crossings in $D_L$, respectively. See Figure \ref{overflow}(i). The \emph{writhe} of $D_L$ is $\omega(=\omega_{D_L})=p-n.$
\begin{figure}[ht!]
  \centering
  \includegraphics[width=90mm]{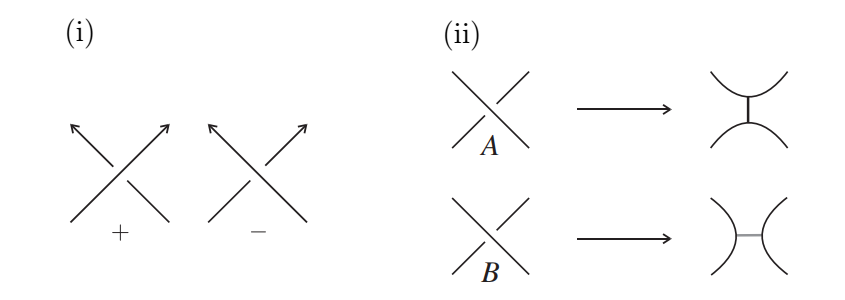}
  \caption{(i) Convention of positive and negative signs (ii) The smoothing of a crossing according to an $A$-label and a $B$-label }\label{overflow}
\end{figure}
Let $Cr(D_L)$ be the set of all crossings of $D_L$.
A \emph{Kauffman state} of $D_L$ is a function $s : Cr(D_L) \rightarrow \{A,B\}$.
Note that the collection $\mathcal{S}$ of all possible Kauffman states of $D_L $ has $2^c$ elements, where $c$ is the number of crossings of $D_L.$
Let $\sigma(s)$ be the difference between the number of $A$-labels and $B$-labels of a Kauffman state $s,$ i.e., $\sigma(s) = \mid s^{-1}(A)\mid$ $-$ $\mid s^{-1}(B)\mid.$ 
Let $D_s$ be the result of smoothing $D_L$ by assigning either an $A-$ or $B-$smoothing to each crossing according to the label of a Kauffman state $s$ as depicted in Figure \ref{overflow}(ii). We denote the number of circles of $D_s$ by $|s|.$

An {\it enhanced Kauffman state} $S$ of an oriented link diagram $D_L$ is a Kauffman state $s$ together with a map which associates a sign $\varepsilon_i = \pm 1$ to each of the circles of $D_s.$ We let $\tau = \sum_{i=1}^{\mid s \mid} \varepsilon_i$.
For an enhanced Kauffman state $S,$ we define two integers by
$$i = i(S) = \cfrac {\omega-\sigma}{2},~~ j = j(S) = \omega+i+\tau.$$

Let $S$ and $T$ be enhanced states of an oriented link diagram $D_L.$
The states $T$ and $S$ are {\it adjacent} if they satisfy the following conditions:

\begin{enumerate}
\item[\rm(1)]
$i(T)=i(S)+1$ and $j(T)=j(S).$
\item[\rm(2)]
$S$ and $T$ associate identical labels to all crossings except one denoted by $x=x(S,T)$, where
$S$ assigns an $A$-label and $T$ a $B$-label.
\item[\rm(3)]
$S$ and $T$ assign the same signs to the common circles in $D_S$ and $D_T$.
\end{enumerate}

Note that the circles which are not common in $D_S$ and $D_T$ are those
touching the crossing $x.$
Figure \ref{(possi)} shows the different possibilities of going from $D_S$ to $D_T,$ where $T$ is adjacent to $S.$

\begin{figure}[h]
\centering
\includegraphics[width=120mm]{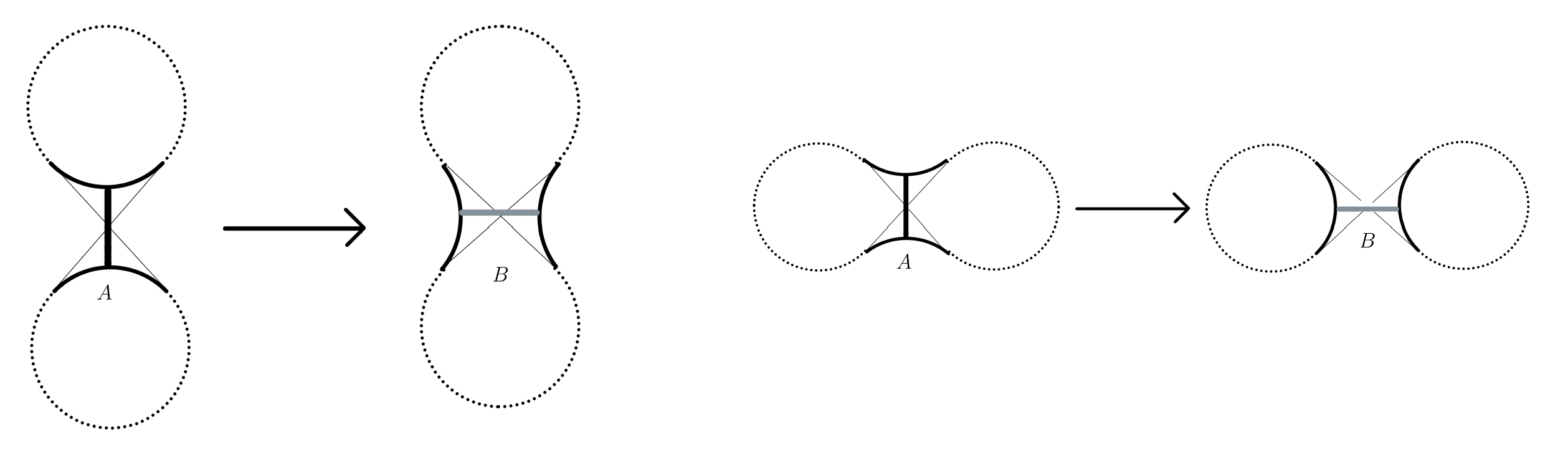}
\caption{All possible enhancements when melting two circles are
$(++ \longrightarrow +)$, $(+-\longrightarrow-)$,$(-+\longrightarrow-)$.
The possibilities for the splitting are
$(-\longrightarrow--)$,$(+\longrightarrow+-)$,$(+\longrightarrow-+)$.}\label{(possi)}
\end{figure}

Let $C^{i,j}(D_L)$ be the free abelian group generated by the set of enhanced states $S$ of $D_L$ with $i=i(S)$ and $j=j(S)$.
We order the crossings in $D_L.$
For each fixed integer $j,$ let us consider the ascendant complex
$$
\cdots \rightarrow C^{i-1,j}(D_{L}) \xrightarrow{\partial_{i-1}} C^{i,j}(D_{L}) \xrightarrow{\partial_{i}} C^{i+1,j}(D_{L}) \rightarrow \cdots
$$
together with boundary operations $\partial_i (S) = \sum (S : T)T,$ where
\begin{equation*}
(S : T)=
\begin{cases}
0 & \text{if } T \text{ is not adjacent to} ~S; \\
(-1)^k & \text{otherwise}.
\end{cases}
\end{equation*}
Here, $k$ is the number of $B$-labeled crossings coming after the changed crossing.
It turns out that $\partial_{i} \circ \partial_{i-1}=0$, and therefore
$\{C^{i,j}(D_L),\partial_i\}$ forms a chain complex.
Khovanov showed that the homology groups yielded from the above chain complex are independent on the choice of link diagrams, i.e., these homology groups are link invariants. See \cite{Kho00} for further detatils.

\begin{theorem}
The family of (co)homology groups
\begin{center}
$KH^{i,j}(L) = KH^{i,j}(D_L) =\cfrac{\textrm{Ker}{(\partial_i)}}{\textrm{Im}{(\partial_{i-1})}}$
\end{center}
are called the \emph{Khovanov (co)homology} of an oriented link $L.$
\end{theorem}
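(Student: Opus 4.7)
The plan is to unpack the two implicit claims that the theorem packages together: first, that $\{C^{i,j}(D_L),\partial_i\}$ really is a chain complex, so that the quotient $\textrm{Ker}(\partial_i)/\textrm{Im}(\partial_{i-1})$ is well-defined; and second, that the resulting groups depend only on the oriented link $L$, not on the diagram $D_L$ chosen to represent it.

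For the first claim I would verify $\partial_i\circ\partial_{i-1}=0$ by a direct local computation. Fix an enhanced state $S$ and let $U$ be any enhanced state appearing in $\partial_i\partial_{i-1}(S)$. Then $U$ is obtained from $S$ by switching two crossings $x_1,x_2$ from an $A$-label to a $B$-label, and there are exactly two intermediate states $T_1,T_2$, depending on which of $x_1,x_2$ is switched first. It suffices to show that the contributions $(S:T_1)(T_1:U)$ and $(S:T_2)(T_2:U)$ cancel for every local configuration; this amounts to a finite case analysis over the four smoothing patterns at $\{x_1,x_2\}$ (two melts, two splits, or one of each) combined with the sign assignments listed in Figure~\ref{(possi)}. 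The cancellation arises because the exponent $k$ in the rule $(-1)^k$ counts the $B$-labelled crossings coming after the switched one, and the two possible orderings flip this parity in exactly one of the two adjacency steps.

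For the second claim I would show that $(C^{*,j}(D_L),\partial)$ is independent of $D_L$ up to chain homotopy equivalence by verifying invariance under each Reidemeister move. For R1 and R2, one partitions the enhanced states of the modified diagram according to the labels on the new crossing(s), identifies an acyclic subcomplex (or constructs an explicit deformation retract), and checks that the quotient is isomorphic, as a bigraded chain complex, to that of the original diagram once the shifts in $i$ and $j$ coming from the change in the writhe $\omega$ are accounted for. For R3 I would follow the standard approach of realizing the move inside a ``cube of resolutions'' and reducing it to a composition of R2 cancellations, using the explicit homotopies from the R2 step.

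The main obstacle is the R3 verification: there are $2^3$ local resolutions of the triple-crossing region, each with multiple compatible enhancements, and the chain homotopies constructed for R2 must be shown to be compatible with the additional crossing while all signs are tracked carefully. By comparison, the check $\partial^2=0$ and the invariance under R1 and R2 are routine once conventions on crossing order and sign are fixed. A detailed treatment of all of these steps is carried out in \cite{Kho00} (see also \cite{Vir04}), so I would only reproduce enough of the analysis to fix notation for the computations used later in this paper.
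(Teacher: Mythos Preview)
Your proposal is correct and matches the paper's treatment: the paper does not prove this statement at all but simply records, in the paragraph preceding the theorem, that $\partial_i\circ\partial_{i-1}=0$ and that invariance under the choice of diagram was established by Khovanov, referring the reader to \cite{Kho00}. Your outline goes further than the paper by sketching the $\partial^2=0$ cancellation and the Reidemeister-move strategy, but you likewise conclude by deferring the details to \cite{Kho00} and \cite{Vir04}, so the approaches coincide.
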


Let $j_{\rm min}=j_{\rm min}(D_L)=
{\rm min}\{j(S)~|~ S \text{~  is an enhanced state of} ~D_L\}.$
The chain complex $\{C^{i,j_{\rm min}}, \partial_{i}\}$ is 
the {\it extreme Khovanov chain complex}, and the corresponding homology
groups $KH^{i,j_{\rm min}}(D_L)$ are the {\it (potential) extreme Khovanov homology groups}.

\begin{corollary} \emph{{\cite{GMS18}}} \label{GMS cor}
Fix an oriented link diagram $D$ with $c$ crossings and $n$ negative crossings. Then $j_{\rm min}=c-3n-|s_{A}D|,$ where $|s_{A}D|$ is the number of circles of $s_{A}D.$
\end{corollary}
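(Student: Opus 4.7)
The plan is to show the inequality $j_{\min}\geq c-3n-|s_AD|$ and exhibit an enhanced state achieving equality. I would start by recording explicitly what the three ingredients of $j(S)$ contribute. If $s$ is a Kauffman state with $b$ $B$-labels, then $\sigma(s)=c-2b$, so
$$i(s)=\frac{\omega-\sigma(s)}{2}=\frac{(p-n)-(c-2b)}{2}=b-n,$$
using $p+n=c$, $p-n=\omega$. Also $\tau(S)\geq -|s|$ for every enhancement $S$ of $s$, with equality when every circle of $D_s$ is labeled $-1$. Substituting into $j(S)=\omega+i(S)+\tau(S)$ gives the pointwise bound
$$j(S)\geq (p-n)+(b-n)-|s|=c-3n+(b-|s|),$$
so it suffices to prove $|s|-b\leq |s_AD|$ for every Kauffman state $s$.

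Next I would verify this combinatorial bound by moving from $s_A$ to $s$ one crossing at a time. Each time we flip a single crossing label from $A$ to $B$, the corresponding smoothing is replaced by the opposite smoothing, and it is a standard observation (evident from the two local pictures of smoothing) that $|s|$ changes by exactly $\pm 1$: two circles merge into one, or one circle splits into two. Hence after performing $b$ such flips starting from $s_A$, we have $|s|\leq |s_AD|+b$, which rearranges to $|s|-b\leq |s_AD|$ as required.

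Finally, to show the lower bound is attained, I would consider the enhanced state $S_0$ whose underlying Kauffman state is $s_A$ (so $b=0$, $i(S_0)=-n$, $|s_A|=|s_AD|$) and whose every circle in $D_{s_A}$ carries the sign $-1$, giving $\tau(S_0)=-|s_AD|$. Then
$$j(S_0)=\omega+i(S_0)+\tau(S_0)=(p-n)-n-|s_AD|=c-3n-|s_AD|,$$
matching the lower bound; therefore $j_{\min}=c-3n-|s_AD|$.

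The main technical observation is the monotonicity statement $|s|-b\leq |s_AD|$; once this is in hand, the rest is algebraic bookkeeping. I expect no real obstacle, since the single-flip analysis of how $|s|$ changes is a routine check of the two smoothing diagrams in Figure \ref{overflow}(ii), and the induction on $b$ is immediate.
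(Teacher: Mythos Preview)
Your argument is correct. The paper does not supply its own proof of this corollary; it simply quotes the result from \cite{GMS18}. Your proof is the standard one: express $j(S)$ in terms of the number $b$ of $B$-labels and the circle count $|s|$, reduce the question to the inequality $|s|-b\le |s_AD|$, prove that by the elementary observation that a single $A\to B$ flip changes the circle count by exactly $\pm1$, and then exhibit the all-$A$, all-minus enhanced state as the minimizer. Nothing is missing, and this is precisely the reasoning one would find (in slightly different packaging) in the cited source.
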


We remark that the integer $j_{\rm min}$ depends on the link diagram $D_L.$
That is, given two link diagrams $D_L$ and $D'_L$ of a link $L,$
$j_{\rm min}(D_L)$ and $j_{\rm min}(D'_L)$ may differ.
However, the smallest value of $j$ of $L,$ denoted by $\underline{j},$ such that
$KH^{i,\underline{j}}(L)$
is non-trivial for some $i$ is unique
because this value does not depend on the choice of link diagrams of $L.$

\begin{definition}
The homology groups $KH^{i,\underline{j}}(L)$ are called the {\it real-extreme Khovanov homology groups} of an oriented link $L.$
\end{definition}

Note that $j_{\rm min}(D_L) \leq \underline{j}(L)$ for every link diagram $D_L$ of $L.$

\section{Independence simplicial complex and their homotopy type}

This section explores how, by using the result that the extreme Khovanov complex of a link diagram can be expressed as the independence complex of its Lando graph, one can construct an explicit geometric realization of the extreme Khovanov homology. Additionally, we recall several technical methods useful for studying independence complexes of graphs.

\begin{definition}
Let ${V} = \{v_0, \dots, v_n\}$ be a set of ~$n+1$ symbols.
An {\it abstract simplicial complex} $\mathcal{K}=(V(\mathcal{K}), P(\mathcal{K}))$ on ${V}$ consists of $V(\mathcal{K})=V$ and $P(\mathcal{K}) \subset 2^{V}$ if $P(\mathcal{K})$ is satisfying the following condition:
\begin{enumerate}
\item If $\sigma \in P(\mathcal{K})$ and $\tau \subset \sigma$,
then $\tau \in P(\mathcal{K})$.
\item $\{v_i\} \in P(\mathcal{K})$ for every $v_i \in V(\mathcal{K})$.
\end{enumerate}
${V}(\mathcal{K})$ and ${P}(\mathcal{K})$ are called the \emph{set of vertices} and the \emph{collection of simplices} of $\mathcal{K},$ respectively.
\end{definition}

\begin{definition}

Let $\mathcal{K}_1$ and $\mathcal{K}_2 $ be two simplicial complexes.
\begin{enumerate}
\item For each $i = 1, 2,$ we choose a distinguished $0$-simplex $v_i$ in $\mathcal{K}_i$. The \emph{wedge product} of $\mathcal{K}_1$ and $\mathcal{K}_2$ is the simplicial complex obtained by identifying $v_1$ and $v_2$. It is denoted by $ (\mathcal{K}_1, v_1) \vee (\mathcal{K}_2, v_2) .$
\item The \emph{join} of $\mathcal{K}_1$ and $\mathcal{K}_2 $, denoted by $ \mathcal{K}_1 * \mathcal{K}_2 $, is defined as a simplicial complex on the set $ V(\mathcal{K}_1) \cup V(\mathcal{K}_2) $ whose simplices are disjoint union of the simplices of $\mathcal{K}_1$ and of $\mathcal{K}_2.$
\item The \emph{suspension} of $\mathcal{K}_1$, denoted by $ \Sigma \mathcal{K}_1 $, is the join $ \mathcal{K}_1 * S^0 $, where $S^0$ is a discrete space with two points.
\end{enumerate}

\end{definition}

Let us review how to construct the Lando graph from a given link diagram and its associated independence complex.

\begin{definition} \cite{{BM00, Man04}}
Let $D$ be a link diagram, and let ${{s}_{A}}D$ be the state assigning $A$-labels to all the crossings of $D$. An A-chord is said to be
{\it admissible} if it connects the same circle of ${{s}_{A}}D$ to itself.
Then the {\it Lando graph} of link diagram $D,$ denoted by
${{G}_{D}},$ is constructed from ${{s}_{A}}D$
by considering a vertex for every admissible $A$-chord,
and an edge joining two vertices if corresponding $A$-chords alternate
in the same circle.
\end{definition}

\begin{definition}
Let $G$ be an undirected graph, and let $V(G)$ be the set of vertices of $G.$
The abstract simplicial complex $I(G)=(V(I(G)), P(I(G)))$ such that $V(I(G))=V(G)$ and $P(I(G))=\{ \sigma  \mid \sigma \text{~is an independent subset of~} V(G) \}$ is called the {\it independence complex} of $G$.
\end{definition}

\begin{example}
{\rm
Consider the link diagram of the Hopf link shown in Figure \ref{Hopf}(i).
Then ${{s}_{A}}D$ is as depicted in Figure \ref{Hopf}(ii), and so
the corresponding Lando graph is ${{G}_{D}}$ shown in Figure \ref{Hopf}(iii).
}
\end{example}

\begin{figure}[h]
\centering
\includegraphics[width=110mm]{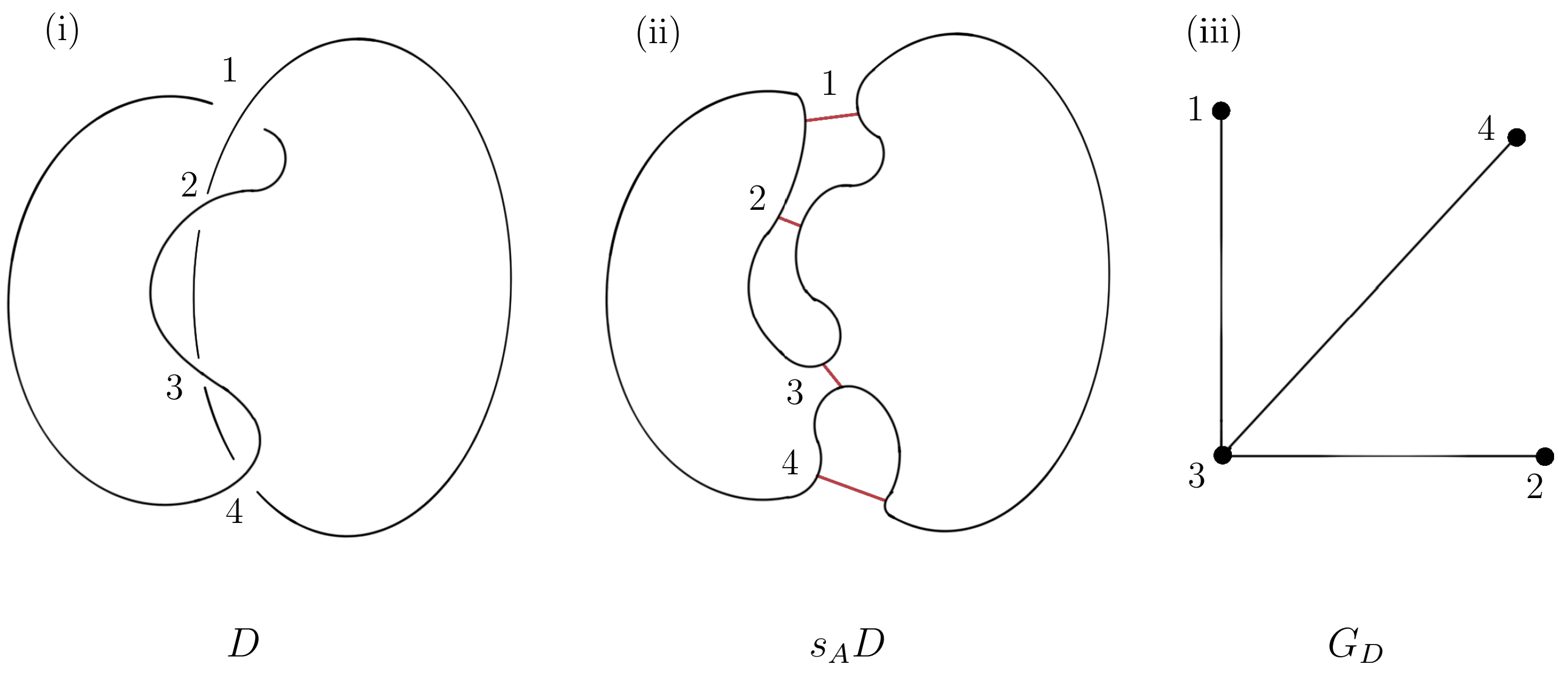}
\caption{The process of obtaining Lando graph from a given link diagram }\label{Hopf}
\end{figure}

\begin{definition} \cite{{GMS18}}
Let $D$ be a link diagram, and let $G_D$ be the Lando graph of $D.$ Let $C^i(I({G_D}))$ be the free abelian group generated by $i$-dimensional
simplices of the associated independence complex $I({G_D}).$ Consider the standard differentials $\delta_{i}$  
\[
\cdots \rightarrow C_{i-1}(I({G_D})) \xrightarrow{\delta_{i-1}} C_{i}(I({G_D})) \xrightarrow{\delta_{i}} C_{i+1}(I({G_D})) \rightarrow \cdots,
\]
that is, $\delta(\sigma) = \sum_v (-1)^k (\sigma \cup {v})$,
where $v$ ranges over the set of vertices of $G_D$ and $k$ is the number of vertices in $\sigma$ that follow $v$ in the predetermined order of the vertices of $G_D$.
Then we have $\delta_{i}\circ\delta_{i-1}=0$, and so $\{C^i(I({G_D))}, \delta_i \}$ forms a chain complex, called the \emph{Lando ascendant complex} of $D.$
The reduced cohomology groups of $\{C^i(I({G_D))}, \delta_i \}$ are called the \emph{Lando cohomology groups} of $D.$
\begin{center}
${\widetilde{H}}^i(I({G_D})) = \cfrac{\textrm{Ker}(\delta_i)}{\textrm{Im}(\delta_{i-1})}$.
\end{center}

\end{definition}

Gonz\'alez-Meneses, Manch\'on, and Silvero showed that the extreme Khovanov complex of a link diagram can be expressed as the independence complex of its associated Lando graph.

\begin{theorem} \emph{{\cite{GMS18}}} \label{GMS main}
Let $D_L$ be a link diagram of oriented link $L$ having $n$ negative crossings.
Let ${{G}_{D_L}}$ be the Lando graph of $D_L$, and let
${{I}({G_{D_L}})}$ be the independence complex of $G_{D_L}$.
Then we have
\begin{center}
$KH^{i,j_{\rm min}}(D_L) \cong {\widetilde{H}}^{i-1+n}({{I}({G_{D_L}})})$.
\end{center}
\end{theorem}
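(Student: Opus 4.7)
The plan is to set up an explicit bijection between enhanced Kauffman states contributing to $C^{*,j_{\min}}(D_L)$ and simplices of $I(G_{D_L})$, shifted by the correct index, and then verify that the Khovanov differential agrees with the simplicial coboundary under this bijection.

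First I would pin down which enhanced states $S$ satisfy $j(S)=j_{\min}$. If $s$ has $b$ $B$-labeled crossings, then $\sigma(s)=c-2b$ and $i(S)=-n+b$. Writing $j(S)=\omega+i(S)+\tau$ and using Corollary \ref{GMS cor}, a short calculation shows $j(S)=j_{\min}$ is equivalent to the two simultaneous conditions $\tau=-|s|$ (every circle of $D_s$ carries sign $-$) and $|s|=|s_AD|+b$ (each $B$-smoothing strictly increases the number of circles). Because switching a single $A$ to a $B$ changes the circle count by $\pm 1$, the second condition forces every $B$-labeled crossing to correspond to an admissible $A$-chord, and no two chosen chords may alternate on a common circle of $s_AD$; otherwise splitting one would merge circles created by a previous split. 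Thus the set of $B$-labeled crossings is exactly an independent set of vertices of $G_{D_L}$.

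Next I would promote this to a chain-level isomorphism. Under the correspondence above, an enhanced state $S$ with $b$ $B$-labels is completely determined by the corresponding independent set $\sigma\subset V(G_{D_L})$ of size $b$, since the signs of all circles are forced to be $-$. A set $\sigma$ of size $b$ is a $(b-1)$-simplex of $I(G_{D_L})$, and its associated state has $i$-grading $i(S)=-n+b$. Therefore
\[
C^{i,j_{\min}}(D_L)\;\cong\;C_{i-1+n}(I(G_{D_L}))
\]
as free abelian groups, with the shift $i\mapsto i-1+n$ matching what the theorem claims.

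Finally I would match differentials. Among the six possibilities of Figure \ref{(possi)}, only the splitting $-\to --$ preserves both the all-negative enhancement and the requirement that the circle count increase; the merging possibilities decrease circles and the $\pm$-splittings create a $+$-circle, so they vanish in the extreme complex. Consequently the adjacency relation restricts to: $T$ is obtained from $S$ by turning a single $A$-label into a $B$-label on an admissible chord $v$ such that $\sigma(S)\cup\{v\}$ remains independent in $G_{D_L}$. This is precisely a cofacet in $I(G_{D_L})$. Fixing the same linear order on crossings as on vertices of $G_{D_L}$, the Khovanov sign $(-1)^k$, where $k$ counts $B$-labels after the new one, coincides with the simplicial coboundary sign. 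The hardest and most delicate step will be this sign bookkeeping: one must check that no hidden reordering appears when restricting the global crossing order to the admissible chords and that the conventions for ``cohomological'' boundary in the Lando ascendant complex line up with $\partial_i$. Once this is verified, passing to (co)homology yields $KH^{i,j_{\min}}(D_L)\cong\widetilde H^{i-1+n}(I(G_{D_L}))$.
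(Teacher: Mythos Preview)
The paper does not actually prove this theorem: it is stated with the citation \cite{GMS18} and used as a black box throughout Section 4. So there is no ``paper's own proof'' to compare against; the result is imported from Gonz\'alez-Meneses, Manch\'on, and Silvero.

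That said, your sketch is essentially the standard argument from \cite{GMS18} and is correct in outline. A few points worth tightening if you want it to stand on its own. First, the combinatorial claim that $|s|=|s_AD|+b$ forces the $B$-labeled chords to form an independent set in $G_{D_L}$ deserves a cleaner justification than ``otherwise splitting one would merge circles created by a previous split''; the precise statement is that for a collection of chords on a disjoint union of circles, resolving all of them increases the component count by the number of chords if and only if every chord has both endpoints on the same circle and no two chords interlace, and this is what singles out independent sets of admissible chords. Second, your remark about the sign bookkeeping is right but the resolution is simpler than you suggest: since every $B$-label in an extreme state is an admissible chord, restricting the global crossing order to admissible chords loses nothing, and the count ``$B$-labels after the changed crossing'' literally equals ``elements of $\sigma$ after $v$''. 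There is no hidden reordering. With those two points filled in, your argument is complete and matches the original.
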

\vspace{3mm}

To facilitate deeper analysis, we review the homotopy types of independence complexes associated with path graphs, cycle graphs, and subsequently summarize the crucial theorems and properties integral to independence complex theory. See \cite{Koz99, Jon08, PS18} for further details.

We denote the \emph{$n$-path graph} depicted in Figure \ref{(pathgon)}(i) by $L_n$ and the \emph{cycle graph} of order $n$ depicted in Figure \ref{(pathgon)}(ii) by $C_n$, respectively. The homotopy types of the independence complexes of $L_n$ and $C_n$ are as follows.

\begin{figure}[h]
\centering
\includegraphics[width=95mm]{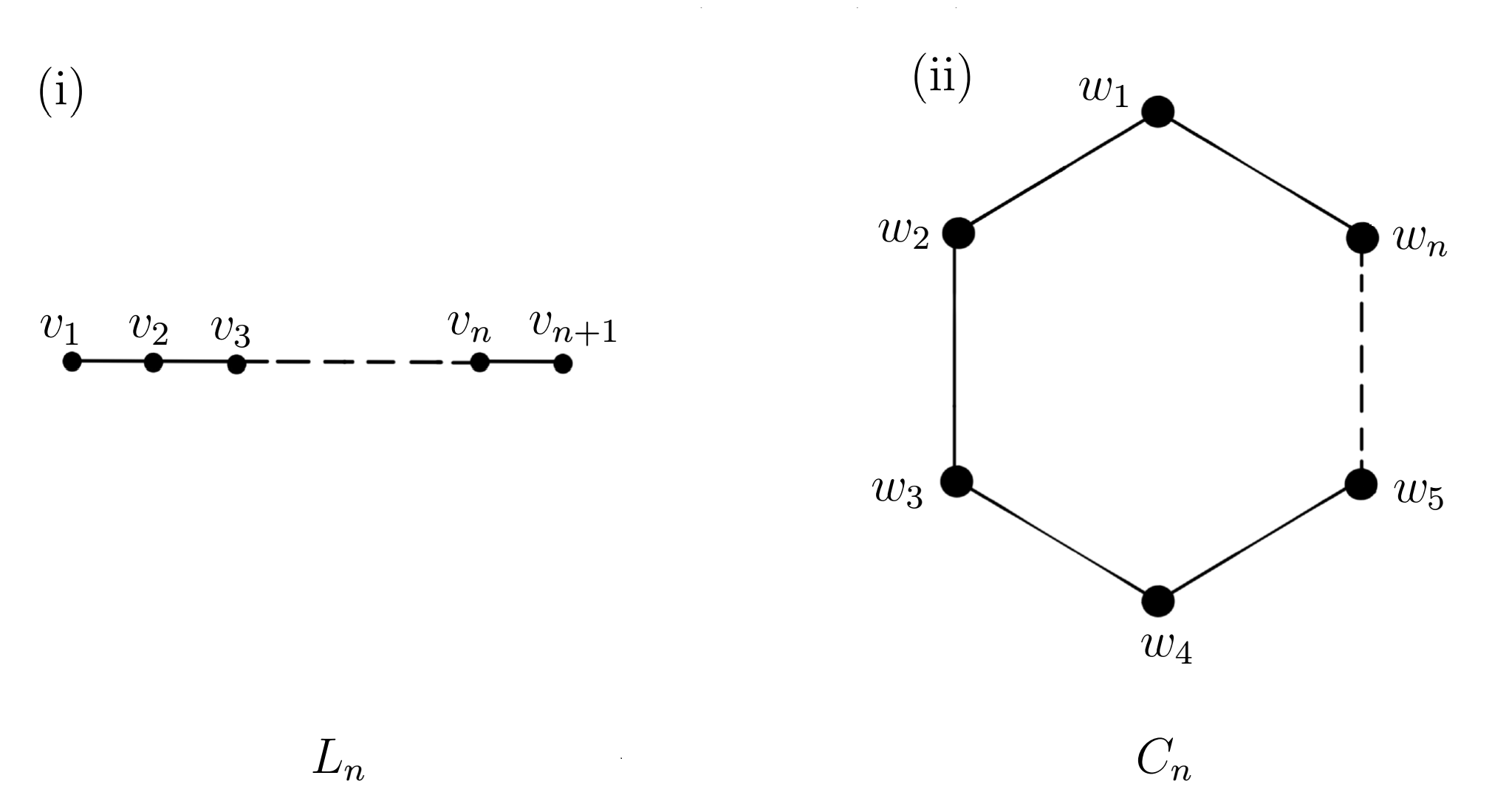}
\caption{The $n$-path graph $L_n$ and the cycle graph $C_n$ of order $n$}\label{(pathgon)}
\end{figure}

\begin{proposition} \emph{{\cite{Koz99}}} \label{(basic)}
For a positive integer $k,$ we have
\begin{enumerate}
    \item
${I}({L_n}) \simeq
\begin{cases}
S^{k-1} & \text{if } n = 3k-2, 3k-1; \\
\emph{a point} & \text{if } n = 3k;
\end{cases}
$
    \item
${I}({C_n}) \simeq
\begin{cases}
S^{k-1} & \text{if } n = 3k \pm 1; \\
S^{k-1} \vee S^{k-1} & \text{if } n = 3k.
\end{cases}
$

\end{enumerate}
\end{proposition}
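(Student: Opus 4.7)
The plan is to induct on $n$ for both statements, exploiting the simplicial identity that whenever $X = A \cup B$ decomposes into two contractible subcomplexes, $X \simeq \Sigma(A \cap B)$ (the homotopy pushout of the two cones on $A \cap B$). Combined with the decomposition $I(G) = \mathrm{star}(v) \cup I(G \setminus v)$ with intersection $\mathrm{link}(v) = I(G \setminus N[v])$, and the fact that $\mathrm{star}(v) = v * \mathrm{link}(v)$ is always a cone, this will drive both recursions.

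For the path $L_n$, label the vertices consecutively along the path and select the vertex $v$ adjacent to an endpoint $u$. The star $\mathrm{star}(v)$ is a cone, hence contractible. The deletion $I(L_n \setminus v)$ is also contractible, because with $v$ removed $u$ becomes isolated and thus serves as a cone apex of $I(L_n \setminus v)$. Their intersection is $\mathrm{link}(v) = I(L_{n-3})$, where $L_{n-3}$ is the sub-path obtained after removing $N[v]$. Hence $I(L_n) \simeq \Sigma I(L_{n-3})$. Direct computation of the three base cases (a short inspection of the independent sets for small $n$), together with the identity $\Sigma S^{k-1} \simeq S^k$, propagates the stated homotopy types through the induction according to $n \bmod 3$.

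For the cycle $C_n$, pick any vertex $v$ and apply the same decomposition. The star is still contractible, but the deletion $I(C_n \setminus v)$ is the independence complex of a sub-path and need not be contractible, so $I(C_n)$ is the mapping cone of the inclusion $\iota \colon I(L') \hookrightarrow I(L'')$, where $L'$ is the sub-path of $C_n$ obtained by removing $N[v]$ and $L''$ is the sub-path obtained by removing only $v$. Splitting by the residue of $n$ modulo $3$ and invoking part (1): (i) when $I(L'')$ is contractible, the mapping cone collapses to $\Sigma I(L')$; (ii) when $I(L')$ is contractible, $\iota$ is null-homotopic so the cone retracts to $I(L'')$; (iii) when both pieces are spheres, of respective dimensions $k-2$ and $k-1$, the inclusion $\iota$ is null-homotopic by $\pi_{k-2}(S^{k-1}) = 0$, so the mapping cone splits as $I(L'') \vee \Sigma I(L') \simeq S^{k-1} \vee S^{k-1}$. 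Each residue class reproduces the claimed homotopy type, with the small case $C_3 \simeq S^0 \vee S^0$ checked by hand. The principal subtlety, and the anticipated main obstacle, is the null-homotopy claim in case (iii); fortunately this reduces to purely dimensional vanishing of the relevant homotopy group and does not require further analysis of the inclusion itself.
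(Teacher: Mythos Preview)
Your argument is correct, but there is nothing in the paper to compare it against: Proposition~\ref{(basic)} is stated with a citation to Kozlov~\cite{Koz99} and is used as a black box, with no proof given in the paper itself.

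That said, your proof is a clean and valid independent argument. The recursion $I(L_n) \simeq \Sigma I(L_{n-3})$ via the cover $I(G) = \mathrm{st}(v) \cup I(G\setminus v)$ is sound once one observes (as you do) that deleting the neighbour of a leaf leaves that leaf isolated and hence a cone apex. For the cycles, identifying $I(C_n)$ with the mapping cone of $I(C_n\setminus N[v]) \hookrightarrow I(C_n\setminus v)$ and splitting by $n \bmod 3$ works; the only genuinely delicate point is case~(iii), where you correctly reduce the null-homotopy of $\iota$ to the vanishing $\pi_{k-2}(S^{k-1})=0$ for $k\ge 2$, handling $C_3$ separately. One small remark: in the paper's convention (as can be read off from the stated homotopy types), $L_n$ denotes the path with $n$ edges, i.e.\ $n+1$ vertices, so when you write $L' = L_{n-4}$ and $L'' = L_{n-2}$ for the sub-paths of $C_n$ on $n-3$ and $n-1$ vertices respectively, the indexing is consistent with part~(1) as used in your case analysis.
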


\begin{remark} \label{(disunion)}
Given two graphs $G_1$ and $G_2,$ the independence complex of the disjoint union $G_1 \sqcup G_2$ of $G_1$ and $G_2$ is homotopy equivalent to the join of their respective independence complexes, that is,
\begin{center}
$I({G_1 \sqcup G_2}) = I({G_1}) * I({G_2}).$
\end{center}
\end{remark}

Let us consider the following two examples to be used in the proofs of Theorem \ref{main1} and Theorem \ref{main2}.

\begin{example} \label{(star)}
Let $R_n$ denote the star graph of $n$ rays of length $2$ as depicted in Figure \ref{(starf)}. Note that the homotopy type of $I({R_n}-{u})$ is $S^{n-1}.$ Since the simplices $[u,w_{1},\dots,w_{n}]$ and
$[w_{1},\dots,w_{n}]$ have the same homotopy type in $I(R_n),$ $I({R_n}) \simeq I({R_n}-{u}).$
Note that the homotopy type of $I({R_n}-{u})$ is equivalent to the join of $n$ copies of $S^0.$ Thus, $I({R_n}) \simeq I({R_n}-{u}) \simeq S^{n-1}.$
\end{example}

\begin{figure}[h]
\centering
\includegraphics[width=75mm]{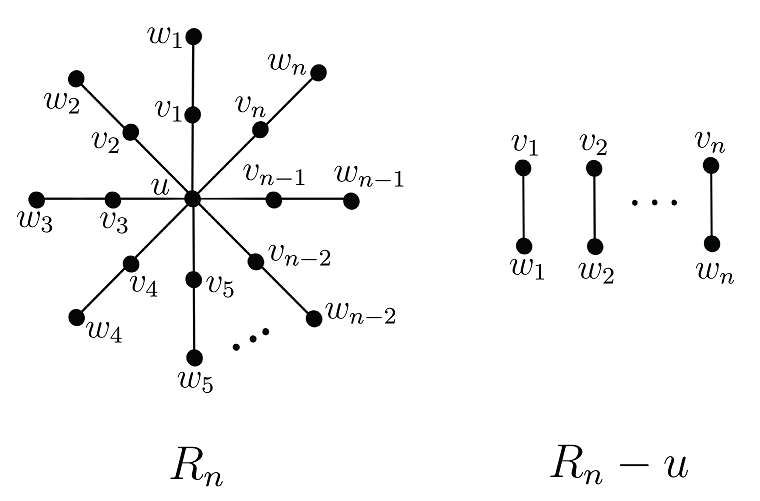}
\caption{The star graph $R_n$ of $n$ rays of length $2$ and $R_n-u$ } \label{(starf)}
\end{figure}

The following are several technical methods that are useful for determining the independence complex of a given graph.

\begin{proposition} \label{(divid)}
\begin{enumerate}
    \item
\emph{{\cite{Cso09}}} {\rm (Csorba reduction)}
Let $H$ be a graph, and let $[u,v]$ be an edge of $H.$ Let $G$ be the graph obtained from $H$ by replacing the edge $[u,v]$ by a path of length $4.$ Then
$I(G) \simeq \sum I(H).$
    \item
\emph{{\cite{PS18}}} {\rm (Generalized Csorba reduction)}
Let $v$ and $w$ be two vertices of a loopless graph $G.$ If $v$ and $w$ are connected by a path $L$ of length three (the two vertices between them having order $2$) and the graph $H$ is obtained from $G$ by contracting $L,$ then $I(G) \simeq \sum I(H).$

\end{enumerate}
\end{proposition}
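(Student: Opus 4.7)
The plan is to prove part (2) directly and deduce part (1) as a corollary. For part (1), inside the length-$4$ path $u, a, b, c, v$ replacing the edge $[u,v]$, the subpath on $u, a, b, c$ has length three, and its internal vertices $a, b$ each have degree two in $G$. Contracting this subpath to a single vertex $\ast$ produces a vertex with neighborhood
\[
(N_G(u)\setminus\{a\})\cup(N_G(c)\setminus\{b\}) = (N_H(u)\setminus\{v\})\cup\{v\} = N_H(u),
\]
so the resulting graph is isomorphic to $H$, and part (2) yields $I(G)\simeq\Sigma I(H)$.

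For part (2), I would combine two standard tools. \emph{Star--deletion:} for any vertex $z$ of a graph $G'$, the star $\{z\}*I(G'-N[z])$ is a contractible cone whose intersection with $I(G'-z)$ equals $I(G'-N[z])$, so $I(G')$ is the mapping cone of the inclusion $I(G'-N[z])\hookrightarrow I(G'-z)$. \emph{Pendant lemma:} if $p\in V(G')$ has unique neighbor $q$, then $I(G')\simeq\Sigma I(G'-N[q])$. This follows from the fold lemma (if $N(a)\subseteq N(b)$ then $I(G')\simeq I(G'-b)$): the inclusion $N(p)=\{q\}\subseteq N(q')$ holds for every $q'\in N(q)\setminus\{p\}$, so every such $q'$ can be deleted, leaving the disjoint union of the edge $pq$ with $G'-N[q]$, and Remark~\ref{(disunion)} together with $I(K_2)=S^0$ completes the argument.

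Now decompose $I(G)$ at the endpoint $v$ of the path $L$ on $v,x,y,w$. In $G-v$ the vertex $x$ becomes a pendant with neighbor $y$, so the pendant lemma gives
\[
I(G-v)\;\simeq\;\Sigma I(G-\{v,x,y,w\})\;=\;\Sigma I(H-\ast),
\]
where $\ast$ is the vertex of $H$ obtained by contracting $L$. In $G-N_G[v]$ the vertex $y$ becomes a pendant with neighbor $w$ (its other neighbor $x$ lies in $N_G[v]$), and a second application yields
\[
I(G-N_G[v])\;\simeq\;\Sigma I\bigl(G-(N_G[v]\cup N_G[w])\bigr)\;=\;\Sigma I(H-N_H[\ast]).
\]
Applying star--deletion to $I(G)$ at $v$ and to $I(H)$ at $\ast$, and using that suspension commutes with homotopy cofibers,
\[
I(G)\;\simeq\;\mathrm{hocofib}\bigl(\Sigma I(H-N_H[\ast])\to\Sigma I(H-\ast)\bigr)\;\simeq\;\Sigma\,\mathrm{hocofib}\bigl(I(H-N_H[\ast])\to I(H-\ast)\bigr)\;\simeq\;\Sigma I(H).
\]

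The main technical point will be naturality: the two pendant-lemma equivalences must be compatible with the inclusion of link into deletion, so that the square in the middle step is genuinely the suspension of the star--deletion square for $I(H)$. This should follow because the fold moves used in each pendant lemma only delete vertices outside $V(H)\setminus N_H[\ast]$ and outside $V(H)\setminus\{\ast\}$, respectively, so both equivalences restrict to the natural inclusion of induced-subgraph independence complexes on the common part.
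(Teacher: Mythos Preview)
The paper does not give its own proof of this proposition; both parts are quoted from the external references \cite{Cso09} and \cite{PS18}, so there is nothing in the paper to compare your argument against and I can only comment on the argument itself.

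Your reduction of (1) to (2) is correct, and for (2) the two individual computations are fine: the identifications $H-\ast \cong G-\{v,x,y,w\}$ and $H-N_H[\ast]\cong G-(N_G[v]\cup N_G[w])$ hold, and each pendant reduction produces the claimed suspension. The problem is precisely the step you flag at the end. Your equivalence $I(G-v)\simeq\Sigma I(H-\ast)$ is obtained by folding away $w$ (which dominates the pendant $x$), leaving the isolated edge $xy$ to supply the $S^0$ factor; your equivalence $I(G-N_G[v])\simeq\Sigma I(H-N_H[\ast])$ is obtained by folding away the neighbours of $w$ other than $y$, leaving the isolated edge $yw$ as the $S^0$ factor. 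After these folds, the reduced graph on the $G-N_G[v]$ side still contains $w$ and not $x$, while the reduced graph on the $G-v$ side contains $x$ and not $w$; neither is an induced subgraph of the other. Hence the inclusion $I(G-N_G[v])\hookrightarrow I(G-v)$ does not restrict to a map between your reduced models, and your closing sentence about the equivalences ``restricting to the natural inclusion on the common part'' does not establish commutativity of the square. The two suspensions are realized with different cone points, and you have not shown that the induced map between them is homotopic to $\Sigma$ of the inclusion $I(H-N_H[\ast])\hookrightarrow I(H-\ast)$.

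The result is certainly true and your outline can be salvaged, but it takes real work: either chase explicit homotopy inverses through the fold retractions, or set up a single decomposition of $I(G)$ in which both suspensions arise from the \emph{same} pair of cone points so that naturality is automatic. As written, the compatibility is asserted rather than proved.
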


\begin{corollary}\emph{{\cite{PS18}}}   \label{(glue)}
Let $G$ be a loopless graph. Then
\begin{center}
${I}({G \mid^{1} C_n}) \simeq \sum{}^{k} I(G)$ ~~if $n=3k+2,$
\end{center}
where $G \mid^{1} C_n$ is obtained from $G$ by gluing $C_n$ along an outer edge (See Figure \ref{(gluing)} as an example.).
\end{corollary}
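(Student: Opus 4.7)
The plan is to iterate the Generalized Csorba reduction from Proposition \ref{(divid)}(2), shortening the attached cycle by three vertices at each step, and then argue by induction on $k$. First I would set up notation: $G \mid^1 C_n$ is obtained from $G$ by adjoining a fresh path $u - x_1 - x_2 - \cdots - x_{n-2} - v$, where $[u,v]$ is the outer edge of $C_n$ identified with an edge of $G$, and $x_1, \dots, x_{n-2}$ are new vertices. Because the $x_i$ are new, each has degree exactly $2$ in $G \mid^1 C_n$. Consequently, for $n \geq 5$ the subpath $L = u - x_1 - x_2 - x_3$ has length $3$ with its two interior vertices of degree $2$, so the hypothesis of the Generalized Csorba reduction applies with endpoints $u$ and $x_3$. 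Contracting $L$ identifies $u$ with $x_3$ and removes $x_1, x_2$, leaving $G$ together with a shorter attached path $u - x_4 - \cdots - x_{n-2} - v$ that closes to a cycle of length $n-3$ through the edge $[u,v]$. In other words, the contracted graph is exactly $G \mid^1 C_{n-3}$, and Proposition \ref{(divid)}(2) gives
\[
I(G \mid^1 C_n) \simeq \Sigma\, I(G \mid^1 C_{n-3}).
\]

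Now I would induct on $k$. For the base case $k = 0$ we have $n = 2$, so $C_2$ contributes only a parallel copy of the edge $[u,v]$ already present in $G$; since the independence complex depends only on the underlying simple graph, $I(G \mid^1 C_2) = I(G) = \Sigma^0 I(G)$. For the inductive step with $n = 3k + 2 \geq 5$, the reduction above yields $I(G \mid^1 C_n) \simeq \Sigma\, I(G \mid^1 C_{n-3})$, and since $n - 3 = 3(k-1) + 2$, the inductive hypothesis gives $I(G \mid^1 C_{n-3}) \simeq \Sigma^{k-1} I(G)$. Combining produces $I(G \mid^1 C_n) \simeq \Sigma^k I(G)$, as desired.

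The only points requiring care are verifying that the Generalized Csorba hypothesis really applies at each step (that the chosen interior vertices of $L$ have no neighbors outside $L$) and the base-case treatment of $C_2$ as a parallel edge. Both are immediate from the construction of $G \mid^1 C_n$ and the fact that every $x_i$ is fresh, so I do not expect a genuine obstacle beyond this bookkeeping. An alternative presentation would skip induction and simply apply Proposition \ref{(divid)}(2) exactly $k$ times in one breath, using that after $k$ contractions the attached cycle has length $n - 3k = 2$ and hence disappears; the content is identical.
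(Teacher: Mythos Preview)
Your argument is correct. The paper itself does not supply a proof of this corollary; it is quoted from \cite{PS18} as a known consequence of the Generalized Csorba reduction (Proposition~\ref{(divid)}(2)). Your derivation---iterating that reduction along the attached path to peel off three vertices at a time until the glued cycle collapses to a doubled edge---is exactly the intended route, and your handling of the base case $n=2$ and of the degree-$2$ hypothesis on the interior vertices is sound.
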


\begin{figure}[h]
\centering
\includegraphics[width=120mm]{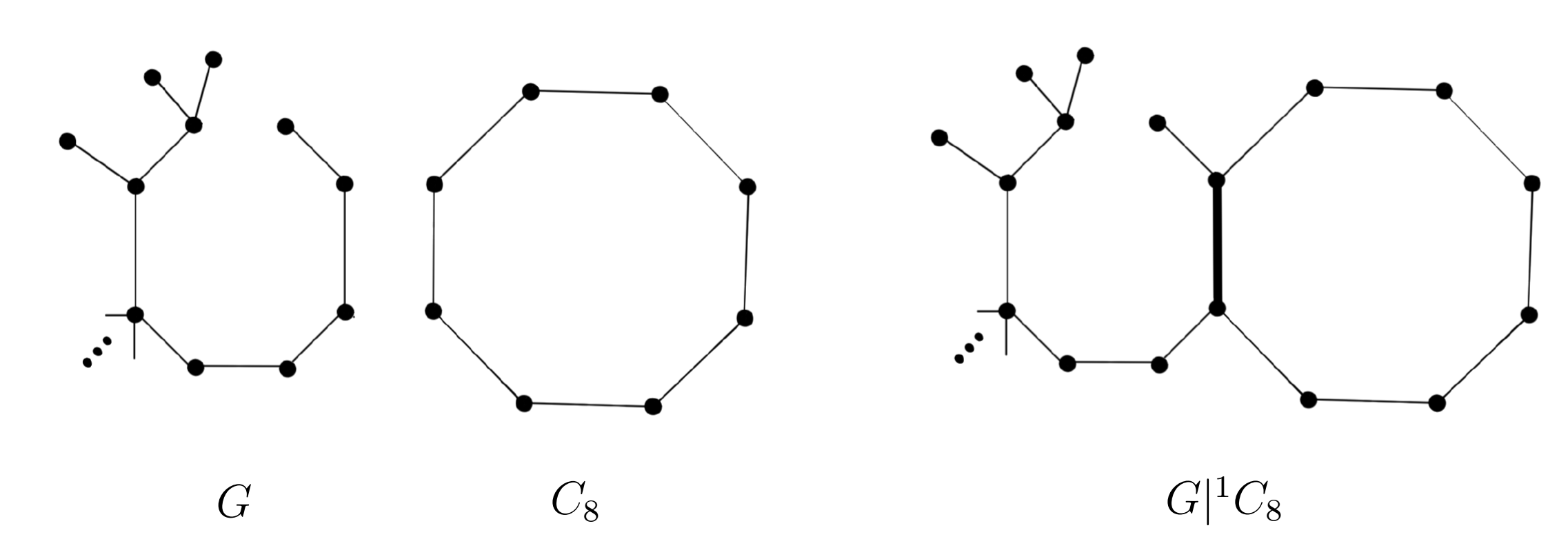}
\caption{The graphs $G,C_{8},$ and $G|^{1}C_{8}.$
The ``gluing edges" have been thickened.
}\label{(gluing)}
\end{figure}

\begin{definition}
For two vertices $v$ and $w$ in a graph $G$, we say that $v$
\emph{dominates} $w$ if ${N_G}(w) \cup \{w\} \cup \{v\} \subset {N_G}(v) \cup \{v\} \cup \{w\}$, where ${N_G}(*)$ is the set of adjacent vertices
to the vertex $*.$
\end{definition}

\begin{lemma} \label{(DL)}
Let $G$ be a graph. Let $v,w$ be two vertices of $G$ such that $v$ dominates $w$.
\begin{enumerate}
\item[\rm(1)] \emph{{\cite{Cso09, Eng09}}}
If \( v \) and \( w \) are not adjacent in \( G \), then $I(G)$ is homotopy equivalent to $I(G - v)$.
\item[\rm(2)] \emph{{\cite{PS18}}}
If \( v \) and \( w \) are adjacent in \( G \), then $I(G)$
is homotopy equivalent to $I(G - v) \vee \Sigma(I(G - \text{st}(v))),$
where ${st}(v)={N_G}(v) \cup \{v\} \cup \{ [u,v] \mid u \in {N_G}(v) \}.$
\end{enumerate}
\end{lemma}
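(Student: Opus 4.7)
The plan is to exploit the standard decomposition
\[
I(G)=I(G-v)\cup\bigl(\{v\}*I(G-N[v])\bigr),
\]
where $N[v]=N(v)\cup\{v\}$ is the closed neighborhood (so $G-N[v]=G-\mathrm{st}(v)$ as an induced subgraph), the two pieces meet along $I(G-N[v])$, and the second summand is a simplicial cone over $v$, hence contractible. Before either part I would unpack the domination hypothesis: using that $G$ is loopless, the containment $N(w)\cup\{v,w\}\subseteq N(v)\cup\{v,w\}$ is equivalent to $N(w)\setminus\{v\}\subseteq N(v)$, and both parts of the lemma will be driven by this single inclusion.

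For part (1), where $v$ and $w$ are non-adjacent, the inclusion reduces to $N(w)\subseteq N(v)$. My approach is to construct an explicit folding map $f\colon I(G)\to I(G-v)$ that sends $v\mapsto w$ and fixes every other vertex. The check that $f$ is simplicial uses $N(w)\subseteq N(v)$: if $\sigma\ni v$ is independent in $G$, then $\sigma\setminus\{v\}$ avoids $N(v)\supseteq N(w)$, so $(\sigma\setminus\{v\})\cup\{w\}$ is independent in $G-v$. The composition $f\circ\iota$ with the inclusion $\iota\colon I(G-v)\hookrightarrow I(G)$ is the identity on $I(G-v)$, and the reverse composition is homotopic to the identity via the straight-line homotopy inside the simplex $\sigma\cup\{w\}$, which is itself a simplex of $I(G)$ for the same neighborhood-containment reason.

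For part (2), where $v\sim w$, the folding map is no longer simplicial because $\{v,w\}$ is an edge, so I would instead argue that the attaching map of the cone above is null-homotopic inside $I(G-v)$. The key observation is that $N_{G-v}(w)=N(w)\setminus\{v\}\subseteq N(v)$, so every vertex outside $N[v]$ is non-adjacent to $w$ in $G-v$; consequently $\{w\}*I(G-N[v])$ is a contractible subcomplex of $I(G-v)$, and the inclusion $I(G-N[v])\hookrightarrow I(G-v)$ factors through it and is null-homotopic. The standard CW-pair identity $X\cup_f CA\simeq X\vee\Sigma A$ for null-homotopic attaching maps then yields
\[
I(G)\simeq I(G-v)\vee\Sigma I(G-N[v])=I(G-v)\vee\Sigma I(G-\mathrm{st}(v)).
\]

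The main subtlety I anticipate lies in part (2): one must verify that the inclusion $I(G-N[v])\hookrightarrow I(G-v)$ is a cofibration, which is automatic for simplicial subcomplex inclusions, and then invoke the wedge-splitting theorem cleanly. Everything else is carried by the single reformulation $N(w)\setminus\{v\}\subseteq N(v)$ of the domination hypothesis.
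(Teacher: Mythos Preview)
The paper does not supply its own proof of this lemma: both parts are quoted from the literature (part~(1) from Csorba and Engstr\"om, part~(2) from Przytycki--Silvero), so there is no in-paper argument to compare against. Your proposal is correct and is in fact the standard proof one finds in those references. The fold $v\mapsto w$ in part~(1) is exactly the argument of Engstr\"om (contiguity of $\mathrm{id}$ and $\iota\circ f$ via the simplex $\sigma\cup\{w\}$), and your treatment of part~(2) --- the decomposition $I(G)=I(G-v)\cup_{I(G-N[v])}\bigl(\{v\}*I(G-N[v])\bigr)$ together with the observation that $w$ cones off $I(G-N[v])$ inside $I(G-v)$, forcing the attaching map to be null-homotopic --- is precisely the argument in Przytycki--Silvero. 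The cofibration check you flag is indeed automatic for subcomplex inclusions, so there is no gap.
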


\section{Geometric realizations of the real-extreme Khovanov homology of pretzel links}

In this section, we investigate explicit geometric realizations of the real-extreme Khovanov homology of pretzel links. Recall that a pretzel link $P(\pm p,\pm q,\pm r)$ is a link admitting a link diagram as depicted in Figure \ref{(pretzel)}, where each positive integer $p, q, r$ represents the number of half-twists and each sign determines the type of crossing (positive or negative) in the corresponding box.

\begin{figure}[ht!]
\centering
\includegraphics[width=100mm]{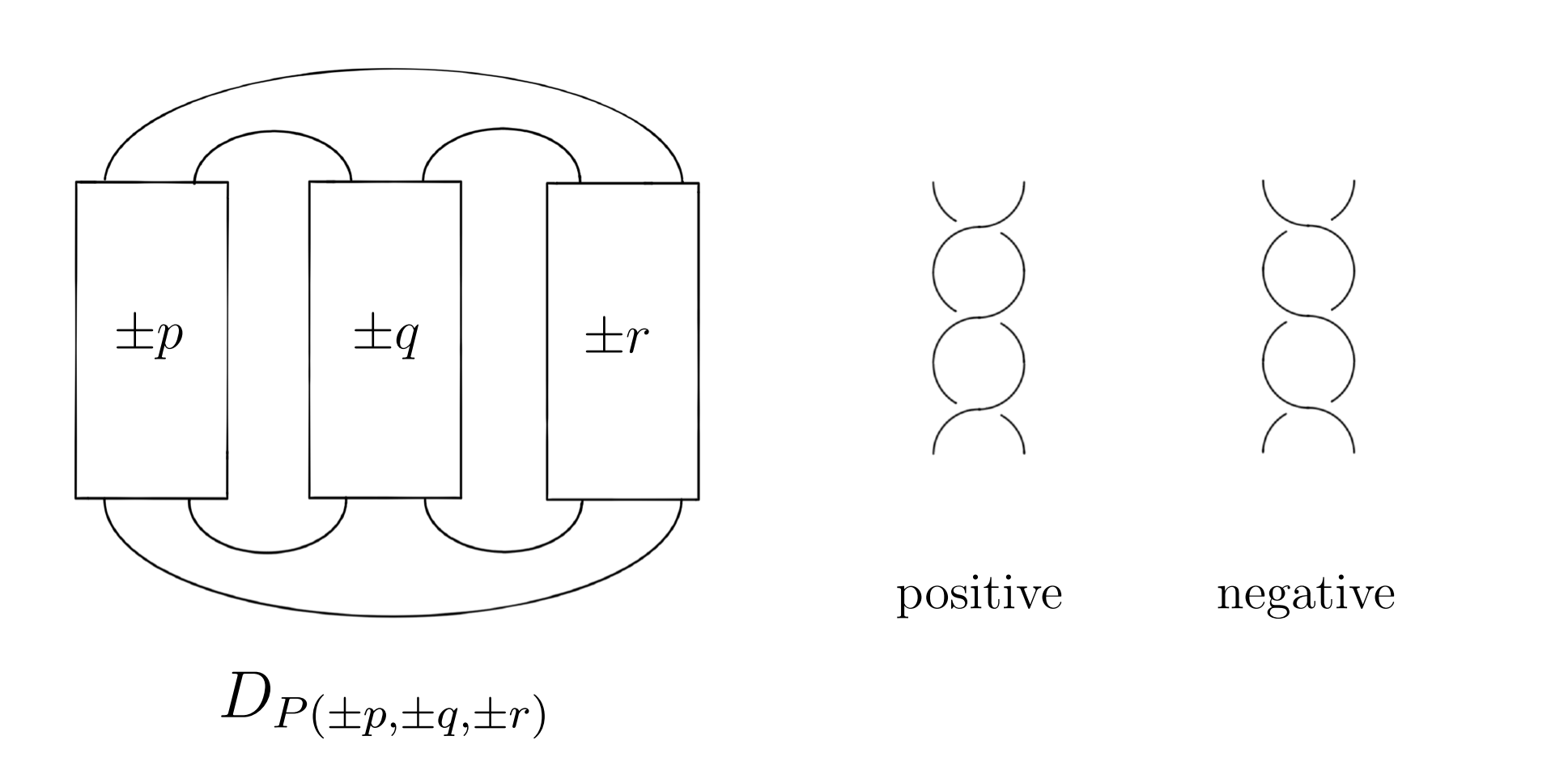}
\caption{A standard link diagram of a pretzel link $P(\pm p,\pm q,\pm r)$}\label{(pretzel)}
\end{figure}


To construct a geometric realization of the real-extreme Khovanov homology of a given link $L$ via Gonz\'alez-Meneses, Manch\'on, and Silvero's method \cite{GMS18}, it is necessary to find a link diagram $D_L$ such that the associated independence complex $I(G_{D_L})$ is non-contractible, where $G_{D_L}$ is the Lando graph of $D_L.$

In the case of the pretzel links $P(p, q, r)$ and $P(-p, -q, -r),$ these are alternating links, and the Lando graphs obtained from their ordinary link diagrams are empty in both cases, meaning that their independence complexes have the homotopy type of $S^{-1}.$ Therefore, we have

\begin{proposition} \emph{{\cite{GMS18}}} \label{main0}
There exist link diagrams $D_{P(p, q, r)}$ and $D_{P(-p,-q,-r)}$ of the pretzel links $P(p, q, r)$ and $P(-p,-q,-r)$, respectively, such that their associated independence complexes $I(G_{D_{P(p, q, r)}})$ and $I(G_{D_{P(-p,-q,-r)}})$ are non-contractible.\\ More precisely, $I(G_{D_{P(p, q, r)}}) \simeq S^{-1} \simeq I(G_{D_{P(-p, -q, -r)}}).$
\end{proposition}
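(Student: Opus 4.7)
The plan is to take the standard pretzel diagrams as explicit link diagrams and to show directly that in each case the Lando graph has no vertices at all. Once this is done, $I(G_D)$ is the empty simplicial complex, whose geometric realization is $S^{-1}$ by the usual convention, and both claimed homotopy equivalences follow.

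For $P(p,q,r)$ with $p,q,r>0$, I would take $D_{P(p,q,r)}$ to be the standard pretzel diagram of Figure~\ref{(pretzel)} with all crossings positive. First, I would compute the all-$A$ smoothing $s_A D_{P(p,q,r)}$ explicitly: inside each of the three twist boxes, the $A$-smoothings decompose the tangle into a predictable pattern of parallel arcs, and gluing across the top and bottom strands of the pretzel then gives an explicit description of $s_A D$ as a disjoint union of circles. Next, for each of the $p+q+r$ $A$-chords (one per crossing), I would check that its two endpoints lie on two \emph{distinct} circles of $s_A D$. Since admissibility of an $A$-chord means it connects a circle to itself, this would force the vertex set of the Lando graph $G_{D_{P(p,q,r)}}$ to be empty. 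The case of $P(-p,-q,-r)$ then reduces to the previous one by taking the mirror diagram $D_{P(-p,-q,-r)}$ with all negative crossings: mirroring interchanges $A$- and $B$-smoothings but preserves the combinatorics of which endpoints lie on which resulting circle.

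The main obstacle is not any deep topology but the uniform bookkeeping in $p,q,r$: one must track precisely how the ``ladder'' of $A$-arcs inside each twist box connects to the outer strands, and hence which circle of $s_A D$ contains each endpoint of each chord, regardless of the parities and sizes of the twist numbers. A clean way to handle this is to regard each twist box as a four-ended tangle, compute the partition of its four boundary points into strand-components after $A$-smoothing, and then assemble the three tangle contributions via the standard pretzel closure. Once this local-to-global picture is in hand, the claim that every $A$-chord spans two distinct circles of $s_A D$ becomes routine to verify, and the identifications $I(G_{D_{P(p,q,r)}}) = \emptyset \simeq S^{-1}$ and $I(G_{D_{P(-p,-q,-r)}}) = \emptyset \simeq S^{-1}$ complete the proof.
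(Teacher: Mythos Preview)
Your approach is essentially the same as the paper's: take the standard pretzel diagrams and observe that their Lando graphs are empty, so the independence complexes are $S^{-1}$. The paper's justification is a single sentence preceding the proposition---it notes that $P(p,q,r)$ and $P(-p,-q,-r)$ are alternating and that the Lando graphs of their ordinary diagrams are empty---whereas you propose to verify this by an explicit strand-by-strand computation of $s_A D$. Both arguments amount to checking $A$-adequacy of the standard alternating diagram; yours is more hands-on, the paper's relies on the (standard) fact that reduced alternating diagrams are $A$-adequate.

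One small point: your reduction of the $P(-p,-q,-r)$ case to the $P(p,q,r)$ case via mirroring is slightly imprecise. Mirroring does interchange $A$- and $B$-smoothings, but this means the $A$-state of the mirror is the $B$-state of the original, which is a genuinely different collection of circles; it does not ``preserve the combinatorics of which endpoints lie on which resulting circle.'' What actually makes the mirror case work is that an alternating diagram is both $A$-adequate and $B$-adequate, so the $B$-state of $D_{P(p,q,r)}$ also has no chord with both endpoints on a single circle. Alternatively, just note that the standard diagram of $P(-p,-q,-r)$ is itself alternating and repeat the direct computation. Either fix is easy, but the sentence as written does not stand on its own.
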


On the other hand, in the case of the pretzel links $P(p, q, -r)$ and $P(p, -q, -r),$ the associated independence complexes obtained from their usual link diagrams are contractible, in general. Therefore, in this case, it is necessary to find appropriate link diagrams to construct geometric realizations of their real-extreme Khovanov homology.

\begin{theorem} \label{main1}
There exists a link diagram $\widetilde{D}_{P(p, q, -r)}$ of the pretzel links $P(p, q, -r)$ such that its associated independence complex $I(G_{\widetilde{D}_{P(p, q, -r)}})$ is non-contractible.\\
More precisely, $I(G_{\widetilde{D}_{P(p, q, -r)}}) \simeq S^{r-1}$.
\end{theorem}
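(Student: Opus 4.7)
The plan is to exhibit an explicit link diagram $\widetilde{D}_{P(p,q,-r)}$ of the pretzel link $P(p,q,-r)$, obtained from the standard diagram of Figure \ref{(pretzel)} by planar isotopies or Reidemeister moves that rearrange the two positive tangles (for instance by flypes, or by sliding them along a strand). The aim of the modification is that the all-$A$ smoothing $s_A\widetilde{D}_{P(p,q,-r)}$ has a simple enough circle/chord structure that the associated Lando graph $G_{\widetilde{D}_{P(p,q,-r)}}$ can be read off directly, after which the homotopy type of $I(G_{\widetilde{D}_{P(p,q,-r)}})$ can be computed using the toolkit of Section~3.

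Concretely, I would first draw $\widetilde{D}_{P(p,q,-r)}$ together with its $A$-smoothing and enumerate the circles of $s_A\widetilde{D}_{P(p,q,-r)}$. Next, I would list the $A$-chords at each crossing and record which are admissible in the sense of the definition of a Lando graph. The target picture is that the $A$-chords arising from the positive $p$- and $q$-tangles each join two distinct circles of $s_A\widetilde{D}_{P(p,q,-r)}$ (and hence contribute no vertices to $G_{\widetilde{D}_{P(p,q,-r)}}$), while the $r$ $A$-chords of the negative tangle are all admissible, producing exactly $r$ vertices of $G_{\widetilde{D}_{P(p,q,-r)}}$. Then I would determine which pairs of these $r$ chords alternate on their common circle, thereby pinning down the edge set of the Lando graph. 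Based on the target homotopy type, I expect $G_{\widetilde{D}_{P(p,q,-r)}}$ to be isomorphic to the star graph $R_r$ of Example \ref{(star)}, or to a graph whose domination structure reduces it to $R_r$ via Lemma \ref{(DL)}, so that $I(G_{\widetilde{D}_{P(p,q,-r)}})\simeq S^{r-1}$ follows immediately. Alternatively, if the Lando graph decomposes as a disjoint union, Remark \ref{(disunion)} combined with Proposition \ref{(basic)} would yield the same conclusion.

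The main obstacle will be the first two steps: choosing the right isotopy of the pretzel diagram, and then carefully tracking the circles and chords of its $A$-smoothing uniformly in the parameters $p$, $q$, $r$. Because $p$ and $q$ can be arbitrarily large, the argument must provide a uniform description of the admissible-chord structure, rather than one that depends on small cases. Once the Lando graph is correctly identified, the homotopy computation itself should be short, relying on Example \ref{(star)}, Remark \ref{(disunion)}, and Lemma \ref{(DL)} rather than on any new machinery.
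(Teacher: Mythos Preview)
Your overall strategy (modify the diagram, read off the Lando graph, reduce with the tools of Section~3) is exactly what the paper does, but the concrete plan you outline will not produce a non-contractible complex. If you only rearrange the two positive tangles by flypes or isotopies and aim for a diagram in which the $p+q$ positive chords are inadmissible while the $r$ negative chords are admissible, then those $r$ chords sit on the same circle as a family of \emph{nested} (non-alternating) arcs. The resulting Lando graph is therefore the edgeless graph on $r$ vertices, whose independence complex is the full $(r-1)$-simplex and hence contractible. In particular it cannot be the star graph $R_r$, which has $2r+1$ vertices, so the target you wrote down is internally inconsistent.

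The idea you are missing is that the paper does \emph{not} merely rearrange existing crossings: it performs Reidemeister~II moves that \emph{add} pairs of crossings---one near the bottom of the first strand and one next to each of the $r-1$ upper crossings of the negative tangle. These extra crossings are what produce new admissible chords that alternate with the original ones; only after this do the domination reductions of Lemma~\ref{(DL)} apply, and the resulting subgraph $G'$ is still not $R_r$ itself but a graph to which one must further apply the Csorba reduction of Proposition~\ref{(divid)} before invoking Example~\ref{(star)}. So your toolkit is right, but you need the R2-augmented diagram $\widetilde D_{P(p,q,-r)}$ of Figure~\ref{(chamix)} and you need Proposition~\ref{(divid)} in the final step, neither of which your plan anticipates.
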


\begin{proof}

The link diagram $\widetilde{D}_{P(p,q,-r)}$ can be deformed from $D_{P(p,q,-r)}$ by the second Reidemeister move to the bottom of the first strand and
each crossing on the third strand except one depicted as Figure \ref{(chamix)}.
Note that the Lando graph associated with $\widetilde{D}_{P(p,q,-r)}$ is $G_{\widetilde{D}_{P(p,q,-r)}}$ in Figure \ref{(Landomix)}(i).
Then $I({G_{\widetilde{D}_{P(p,q,r)}}})$ is homotopy equivalent to
$I({G^{\prime}_{\widetilde{D}_{P(p,q,r)}}})$ as illustrated in Figure \ref{(Landomix)}(ii) by iterated use of Lemma \ref{(DL)}.
Moreover, the homotopy type of independence complex of $G^{\prime}_{\widetilde{D}_{P(p,q,r)}}$ is $S^{r-1}$ by Proposition \ref{(divid)} and Example \ref{(star)}.
Therefore, $I({G_{\widetilde{D}_{P(p,q,r)}}}) \simeq S^{r-1}.$
\end{proof}

\begin{figure}[ht!]
\centering
\includegraphics[width=150mm]{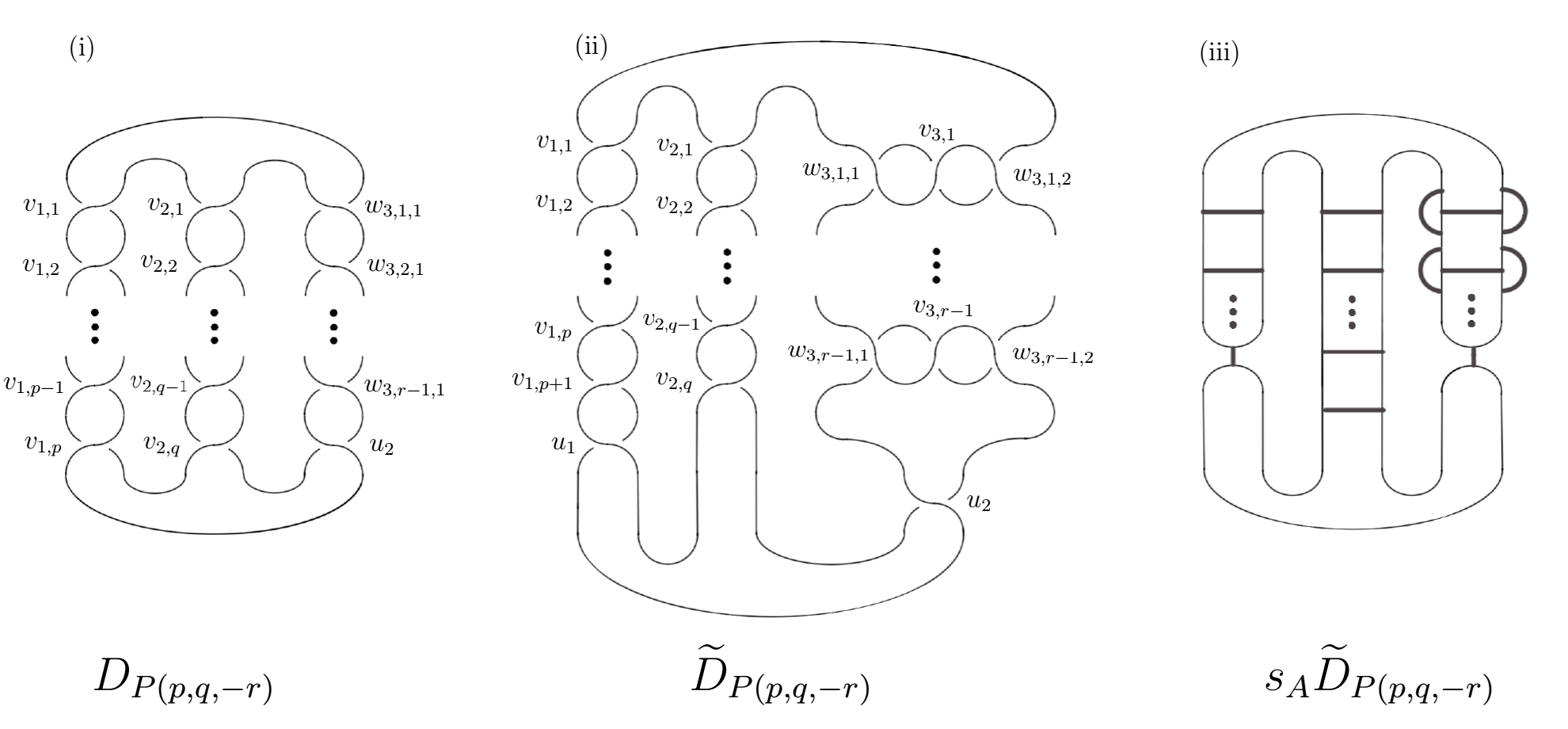}
\caption {The link diagrams ${D}_{P(p,q,-r)} $ and $ \widetilde{D}_{P(p,q,-r)},$
and $s_{A}\widetilde{D}_{P(p,q,-r)}$}\label{(chamix)}
\end{figure}

\begin{figure}[ht!]
\centering
\includegraphics[width=70mm]{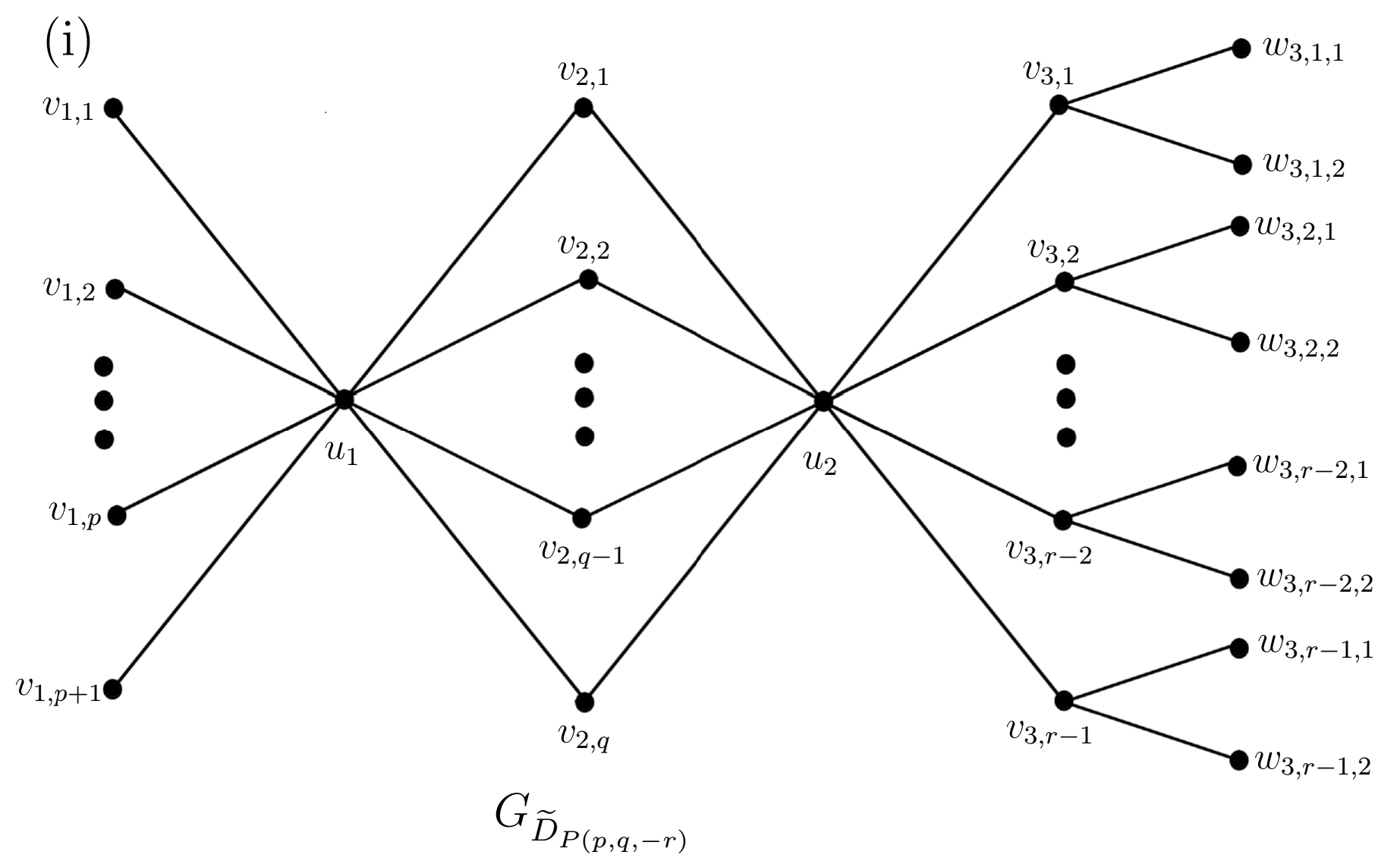}
\includegraphics[width=70mm]{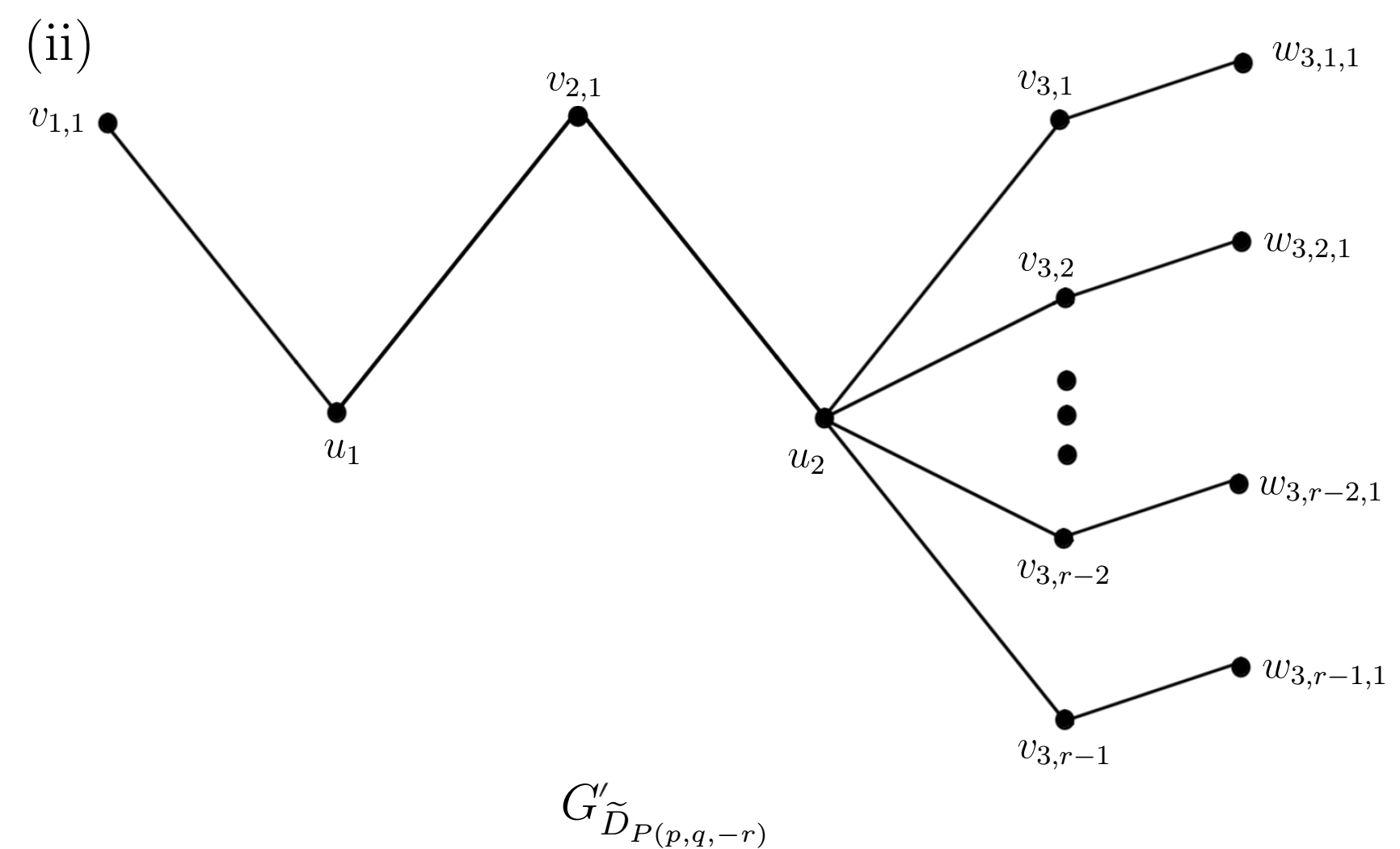}
\caption {The Lando graph of $\widetilde{D}_{P(p,q,-r)}$ and its subgraph $G^{\prime}_{\widetilde{D}_{P(p,q,r)}}$}\label{(Landomix)}
\end{figure}

In the following theorem, for we find link diagrams $\widetilde{D}_P$ of the pretzel link $P(p, -q, -r)$, for most values of $(p,q,r)$, such that its associated independence complex $I(G_{\widetilde{D}_{P}})$ is non-contractible.  We are unable to deal with the case that  $\min\{q,r\} = p+1$ and $q \neq r$. In Remark \ref{rem}, we explain where our construction fails for these  cases.  

\begin{theorem} \label{main2}
Let $(p,q,r)$ be a triple of positive integers such that if $\min\{q,r\} = p+1,$ then $q = r$.  There exists a link diagram $\widetilde{D}_{P}$ of the pretzel links $P(p, -q, -r)$ such that its associated independence complex $I(G_{\widetilde{D}_{P}})$ is non-contractible. \\
More precisely,
\begin{equation*}
I({G_{\widetilde{D}_{P(p,-q,-r)}}}) \simeq
\begin{cases}
(1)~ S^{q+r-2} \vee S^{q+r-2} & \text{if } p \geq q=r;\\
(2)~ S^{q+r-2} & \text{if } p+1=q=r;\\
(3)~ S^{q+r-2} & \text{if } q \leq p {\rm~and~} q<r;\\
(4)~ S^{q+r-2} & \text{if } r \leq p {\rm~and~} r<q;\\
(5)~ S^{q+r-3}& \text{if } p<{\rm min}\{q,r\}-1.\\
\end{cases}
\end{equation*}

\end{theorem}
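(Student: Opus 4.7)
The plan is to follow the pattern of the proof of Theorem \ref{main1}. First, I would construct $\widetilde{D}_{P(p,-q,-r)}$ by applying suitable second Reidemeister moves to the standard pretzel diagram $D_{P(p,-q,-r)}$, this time modifying both of the negative-twist bands rather than just one, so that the associated Lando graph $G_{\widetilde{D}_P}$ has a combinatorial structure amenable to the reductions of Section 3. The intent is that in $s_A \widetilde{D}_P$, the twists of each band contribute an explicit chain of admissible $A$-chords, and that these three chains interact only through a small central piece coming from the circles shared by all three bands.

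Next, I would draw $G_{\widetilde{D}_P}$ explicitly. I expect it to consist of three chains whose lengths are controlled by $p$, $q$, and $r$, attached to this central piece. The $p$ positive twists behave differently under $s_A$-smoothing than the $-q$ and $-r$ negative twists, so the $p$-chain attaches asymmetrically with respect to the two negative chains. This asymmetry is what drives the case distinction (1) through (5): the relative sizes of $p$, $q$, and $r$ determine which reduction route one follows.

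With the graph in hand, I would systematically simplify $I(G_{\widetilde{D}_P})$ using the tools of Section 3. Iterated application of Lemma \ref{(DL)}(1) will collapse the long path portions of the graph, just as in the proof of Theorem \ref{main1}. Applications of the adjacent-domination case, Lemma \ref{(DL)}(2), produce wedge summands; I expect this to happen exactly once in the symmetric case (1), $p \geq q = r$, yielding the wedge $S^{q+r-2} \vee S^{q+r-2}$. The Generalized Csorba reduction, Proposition \ref{(divid)}(2), converts remaining order-two chord segments into suspensions, and the gluing corollary, Corollary \ref{(glue)}, handles cycles $C_{3k+2}$ hanging off the skeleton. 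At the end of the reduction chain the graph should reduce to either a star $R_n$ (whose independence complex is computed in Example \ref{(star)}) or a small cycle $C_n$ (handled by Proposition \ref{(basic)}(2)), from which the exponent $q+r-2$ or $q+r-3$ is read off by tracking how many suspensions were applied.

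The main obstacle is the case-by-case bookkeeping: one must verify in each regime that the domination chain terminates at the expected graph and count precisely how many suspensions arise. I expect case (5), where $p < \min\{q,r\}-1$, to be the most delicate, because the short $p$-chain is absorbed before it can contribute a suspension, which is why the resulting sphere has dimension $q+r-3$ rather than $q+r-2$. Finally, the excluded regime $\min\{q,r\} = p+1$ with $q \neq r$ lies precisely on the boundary between cases (3)/(4) and (5); there, the domination step that would otherwise collapse the end of the shorter negative chain fails to apply, breaking the chain of reductions, and the nature of this obstruction should match the explanation promised in Remark \ref{rem}.
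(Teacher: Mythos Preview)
Your plan diverges from the paper at the very first step, the construction of $\widetilde{D}_P$, and this is a genuine gap rather than an alternative route. You propose to imitate Theorem~\ref{main1} by applying second Reidemeister moves to the two negative bands. The paper does something quite different: it manipulates the \emph{positive} band. Starting from the bottom, each of $m=\min\{p,q-1,r-1\}$ positive crossings is ``undone'' by sliding it over the other two strands, so that the diagram becomes wrapped in $m-1$ concentric circles around its bottom (Figure~\ref{A1}). Second Reidemeister moves are only applied secondarily, to whatever negative crossings remain above this wrapped region.

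The payoff of this construction is a Lando graph with a very specific shape: a small core $G^*$ (essentially a $6$-cycle, possibly decorated with extra copies of one vertex $x$ and with small trees hanging off two distinguished vertices $z,z'$) to which is attached a ``tail'' of $m-1$ copies of $C_8$ glued edge-to-edge, finished by a $3$-path. The tail contributes a clean $\Sigma^{2m-1}$ via Corollary~\ref{(glue)} and Proposition~\ref{(divid)}(2), so the entire case analysis is about the independence complex of the tiny graph $G^*$. In particular, the wedge in case~(1) does \emph{not} come from Lemma~\ref{(DL)}(2) as you anticipate; it comes directly from $I(C_6)\simeq S^1\vee S^1$ via Proposition~\ref{(basic)}. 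Cases~(3), (4), (5) reduce $G^*$ to a disjoint union of edges by deleting $z$ and/or $z'$ (which dominate the leaves of their attached trees); neither the star $R_n$ nor Example~\ref{(star)} appears anywhere. Your picture of ``three chains attached to a central piece'' and a domination cascade along them does not match the actual graph, and your proposed $R2$-only construction is not shown to yield a non-contractible complex at all; given that even the paper's more delicate construction collapses in the excluded borderline case (Remark~\ref{rem}), a coarser one is unlikely to succeed across all regimes.
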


\begin{proof}
When $p,-q,$ and $-r$ are given, let $P$ be the pretzel link $P(p,-q,-r)$ and $D_P$ be its standard link diagram as given in Figure \ref{(pretzel)}. We will deform $D_P$ to an equivalent link diagram $\widetilde{D}_P$ via Reidermeister moves, and from the smoothed diagram $s_{A}\widetilde{D}_P$ of  $\widetilde{D}_P$, get the Lando graph $G_{\widetilde{D}_P}$ whose independence complex $I(G_{\widetilde{D}_P})$ will have homotopy type as given in the statement of the theorem.          
  
We first consider the base case of $p = q = r$ in some detail, and then explain how the treatment differs from this in the other cases. 
Figure \ref{A1} shows how in the base case the link diagram $D_{P}$ can be deformed by Reidemeister moves to $\widetilde{D}_{P}$. Only the case $p = 7$ is shown, but it should be clear how to continue this: starting at the bottom, we undo the bottom crossing in the leftmost strand of $D_{P}$ by crossing it over the other two strands.  We then undo the next crossing, wrapping it carefully around the previous crossing. Proceeding as shown in the figure, we undo $p-1$ crossings on the leftmost strand, and the resulting diagram is wrapped $p-1$ times in concentric circles around the bottom of the link diagram.

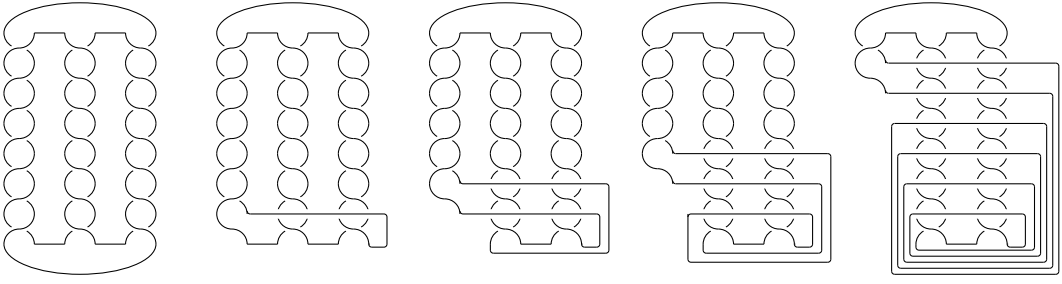
\begin{figure}[ht!]
\begin{tikzpicture}[every node/.style={vert}, scale = .4]
    \diagTop{0}{7}
   \pCh006\nCh206\nCh406
    \diagBot{0}{0}

   \begin{scope}[xshift = 7cm]
    \diagTop{0}{7}
    \pCrr00\pCh{0}{1}{6}
    \nCh206
    \nCh406
    \draw (1,0) -- (2,0); \draw (3,0) -- (4,0);
    \ol{1}{1}{5.5}
    \sqp{5.6}{1}{-.1}{5}{0}
   \end{scope}

   \begin{scope}[xshift = 14cm]
    \diagTop{0}{7}
    \pCrr01\pCh{0}{2}{6}
    \nCh206\nCh406
    \draw (3,0) -- (4,0);
    \ol{1}{1}{5.5}
    \sqp{5.6}{1}{-.1}{5}{0}
    \ol{1}{2}{5.8}
    \sqp{5.9}{2}{-.3}{2}{0} 
   \end{scope}

   \begin{scope}[xshift = 21cm]
    \diagTop{0}{7}
    \pCrr02\pCh036
    \nCh206\nCh406
    \draw (3,0) -- (4,0);
    \old{1.5}{1}{5.5} 
    \sqp{5.6}{1}{-.1}{5}{0}

    \ol{1}{2}{5.8}
    \sqp{5.9}{2}{-.3}{2}{0} 

    \ol{1}{3}{6.1}
    \sqp{6.2}{3}{-.6}{1.5}{1}
   \end{scope}

   \begin{scope}[xshift = 28cm]
    \diagTop{0}{7}
    \pCrr05\pCh066
    \nCh206\nCh406
    \draw (3,0) -- (4,0);
    \old{1.8}{1}{5.5} 
    \sqp{5.6}{1}{-.1}{5}{0}

    \old{1.6}{2}{5.8}
    \sqp{5.9}{2}{-.2}{2}{0} 

    \old{1.4}{3}{6}
    \sqp{6.1}{3}{-.4}{1.8}{.9}

    \old{1.2}{4}{6.2}
    \sqp{6.3}{4}{-.6}{1.6}{1.9}

    \ol{1}{5}{6.4}
    \sqp{6.5}{5}{-.8}{1.4}{2.9}

    \ol{1}{6}{6.6}
    \sqp{6.7}{6}{-1}{1.2}{3.9}
   \end{scope}
   \end{tikzpicture}
   \caption{ The link diagram $\widetilde{D}_{P}$ constructed from 
              $D_{P} = D_{P_{(p,-p,-p)}}$ via Reidermeister moves }\label{A1}
   \end{figure}

   \begin{figure}
   \begin{tikzpicture}

   \begin{scope}[xshift=0cm, scale=.5]
    \draw (1,7) -- (2,7); 
    \draw (3,7) -- (4,7); 
    \draw (0,7) arc(180:0:2.5cm and 1cm);
    \draw (2,7) arc(-180:0:.5cm);
    \draw (4,7) arc(-180:0:.5cm);

    \draw (0,7) -- (0,5.1); \scc{0.1}{5}{3} \draw (0.1,5) -- (2,5);
    \draw (1,7) -- (1,6.1); \scc{1.1}{6}{3} \draw (1.1,6) -- (2,6);
    \draw[red] (0,6.5) -- (1,6.5);\draw (0.5,6.7) node {\scalebox{.5}{$x$}};

    \foreach \y in {1, ..., 6}{\smcr{2}{\y}\smcr{4}{\y}%
     \draw (3,\y) -- (4,\y);
     \draw (5,\y) -- (5.4+.2*\y,\y);}
    

    \sqp{5.7}{1}{-.1}{5}{0}
    \sqp{5.8}{2}{-.2}{2}{0} 
    \sqp{6.1}{3}{-.5}{1.7}{.9}
    \sqp{6.2}{4}{-.6}{1.6}{1.9}
    \sqp{6.5}{5}{-.9}{1.3}{2.9}
    \sqp{6.6}{6}{-1}{1.2}{3.9}


   \foreach \y in {1,3}{ 
   \scc{2-.2*\y-.1}{\y-.1}{2}\draw (2-.2*\y-.1,\y)--(2,\y); }
   \foreach \y in {2,4}{ 
   \scc{2-.2*\y}{\y-.1}{2}\draw (2-.2*\y,\y)--(2,\y); }

    \draw (2,0) arc(180:0:.5cm);
    \draw (4,0) arc(180:0:.5cm); 
    \draw (3,0) -- (4,0); 

   \end{scope}

   \begin{scope}[xshift=5cm, scale=.5, every node/.style = vert]
   \begin{scope}[draw opacity=0.1]
    \draw (1,7) -- (2,7); 
    \draw (3,7) -- (4,7); 
    \draw (0,7) arc(180:0:2.5cm and 1cm);
    \draw (2,7) arc(-180:0:.5cm);
    \draw (4,7) arc(-180:0:.5cm);

    \draw (0,7) -- (0,5.1); \scc{0.1}{5}{3} \draw (0.1,5) -- (2,5);
    \draw (1,7) -- (1,6.1); \scc{1.1}{6}{3} \draw (1.1,6) -- (2,6);
    \draw[red] (0,6.5) -- (1,6.5);  

    \foreach \y in {1, ..., 6}{\smcr{2}{\y}\smcr{4}{\y}%
     \draw (3,\y) -- (4,\y);
     \draw (5,\y) -- (5.4+.2*\y,\y);}
    

    \sqp{5.7}{1}{-.1}{5}{0}
    \sqp{5.8}{2}{-.2}{2}{0} 
    \sqp{6.1}{3}{-.4}{1.8}{.9}
    \sqp{6.2}{4}{-.6}{1.6}{1.9}
    \sqp{6.5}{5}{-.8}{1.4}{2.9}
    \sqp{6.6}{6}{-1}{1.2}{3.9}


   \foreach \y in {1, ..., 4}{ \scc{2-.2*\y}{\y-.1}{2}\draw (2-.2*\y,\y)--(2,\y); }

    \draw (2,0) arc(180:0:.5cm);
    \draw (4,0) arc(180:0:.5cm); 
    \draw (3,0) -- (4,0); 
   \end{scope}
   \begin{scope}[xshift = 1cm, yshift = -.5cm]   
    \Verts{a7/1.5/7, d7/3.5/7, x/-.5/7}          
    \draw (-1,7) node[empty] {\scalebox{0.5}{$x$}};
    \draw (4,7) node[empty] {\scalebox{0.5}{$z$}};
    \draw (2,7) node[empty] {\scalebox{0.5}{$z'$}};
    \draw (x) edge[bend  left] (a7);                
    \draw (x) edge[bend  left] (d7);  
    \foreach \l in {1,...,6}{                       
       \Verts{a\l/1.5/\l, b\l/1/\l+.5, c\l/2/\l+.5, d\l/3.5/\l, e\l/3/\l+.5, f\l/4/\l+.5 }
       \Edges{a\l/c\l, b\l/c\l, d\l/f\l, e\l/f\l }  
    }
    \foreach \l[evaluate=\l as  \lp using (int(\l+1))]in {1,...,6}{
       \Edges{a\lp/b\l,d\lp/e\l}  
       \draw (a\l) to[out = 0, in = -135] (2.5,\l+.5) to[out = 45, in = 180] (d\lp);
}
    \end{scope}
   \end{scope}

    \begin{scope}[xshift=10cm, scale=.5, yshift=-4cm, every node/.style = vert]
           \Verts{a7/1.5/7, d7/3.5/7, x/-.5/7, a1/2/6.5, a2/1/6.5, a3/1.5/6}
           \draw (x) edge[bend  left] (a7);                
           \draw (x) edge[bend  left] (d7);  
           \Edges{a7/a2, a1/a2, a1/a3};
           \draw (a3) to[out = 0, in = -135] (2.5,6.5) to[out = 45, in = 180] (d7); 
    \draw (-1,7) node[empty] {\scalebox{0.5}{$x$}};
    \draw (4,7) node[empty] {\scalebox{0.5}{$z$}};
    \draw (2,7) node[empty] {\scalebox{0.5}{$z'$}}; 
    \end{scope}

   \end{tikzpicture} 
   \caption{The smoothed link diagram and Lando graph $G_{\widetilde{D}_P}$ of $\widetilde{D}_{P}$
       and its subgraph $G^*$ when $p = q = r=7$}\label{A2} 
    \end{figure}
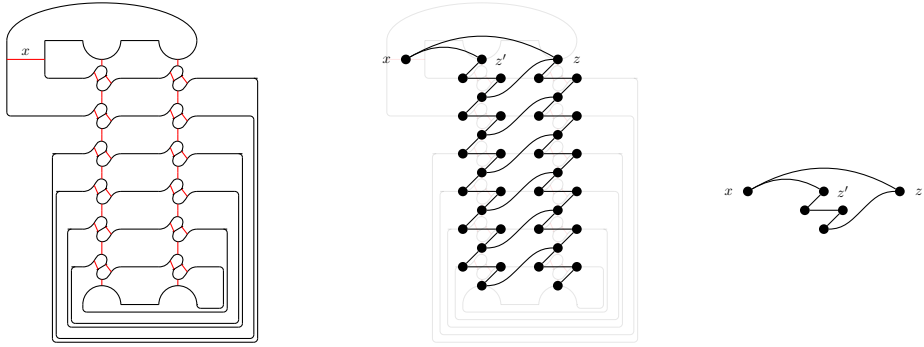

Figure \ref{A2} shows the smoothing $s_{A}\widetilde{D}_{P}$ of $\widetilde{D}_{P}$, and its Lando graph $G_{\widetilde{D}_P}$. We have drawn the graph so that the vertices correspond to the $A$-chords of the smoothed link diagram that are in approximately the same position. We note that $G_{\widetilde{D}_P}$ can be constructed from $G^*$ (in this case, $G^*$ is a $6$-cycle), by successively attaching $p-2$ copies of an $8$-cycle by identifying edges, and then attaching a $3$-path; we refer to this as a `tail of length $p-2$'.   

           
Recall that by Proposition \ref{(basic)} the independence complex of a $6$-cycle is $S^1 \vee S^1$.  When we attach a $8$-cycle by identifying an edge, by Corollary \ref{(glue)} we take the suspension of the independence complex twice, and when we attach a $3$-path, by part (2) of Proposition \ref{(divid)} we take a suspension of the independence complex.  Thus, the independence complex of $G_{\widetilde{D}_P}$ is $S^1 \vee S^1$ suspended $2(p-2) + 1 = 2(q-2) + 1$ times; that is 
\[ I(G_{\widetilde{D}_P}) \simeq \Sigma^{2q-3} (S^1 \vee S^1) \simeq S^{2q-2} \vee S^{2q-2},\]
 as needed.

Observe that depending on the values $p,q,$ and $r$, we could have `used' more or fewer than the $m = p-1$ crossings on the leftmost strand of the pretzel that we used in making $\widetilde{D}_P$. In the general case we will use $m = \min\{p,q-1,r-1\}$. Figure \ref{A3} shows that the case $(p,q,r) = (8,7,9)$ differs from the base case $(p,q,r) = (7,7,7)$ when we use the same number $m = 6$ of the crossings on the leftmost strand of the pretzel. There are $2$ crossings remaining on the leftmost strand. There are also $2$ crossings remaining above the crossing corresponding to the vertex $z$ on the rightmost strand. We apply the 2nd Reidemeister move in the same way as in Figure \ref{(chamix)}(ii) to each of those 2 crossings on the rightmost strand. This corresponds, in the Lando graph $G_{\widetilde{D}_P}$, to having $2$ copies of the vertex $x$, and two small trees attached to the vertex $z$.  We call these `small-trees'.  The two leaves of each small-tree have the same neighbourhood, so by part (1) of Lemma \ref{(DL)} we can remove one the leaves without changing homotopy type of $I(G_{\widetilde{D}_P})$. We henceforth may assume that a small-tree is just a $2$-path.    

In general, when we use $m$ crossings on the leftmost strand of the pretzel to make $\widetilde{D}_P$ we leave $p - m$ crossings on the leftmost strand, $q - m$ and $r - m$ crossings above the concentric circles on the middle and rightmost strands, respectively.
In the Lando graph $G_{\widetilde{D}_P}$, this corresponds to having a tail of length $m-1$, $p-m$ copies of $x$, and $q-m-1$ and $r-m-1$ small-trees attached to the vertices $z'$ and $z$ respectively.    
 
For each case we will use some $m$ crossings in the leftmost strand of the pretzel. The Lando graph $G_{\widetilde{D}_P}$ will be some variation of $G^*$, plus a tail of length $m-1$ and a $3$-path, so the independence complex $I(G_{\widetilde{D}_P}))$ of $G_{\widetilde{D}_P}$ will be homotopy equivalent to $\Sigma^{2m-1} I^*$, where $I^*$ is the independence complex of the subgraph $G^*$ of $G_{\widetilde{D}_P}$.  

 The cases:\\
  
\noindent (1) When $p \geq q = r,$ we use $m = q-1$ crossings on the leftmost strand. Then the graph $G^*$ consists of a $6$-cycle with $p - m$ copies of the vertex $x$.  By part (1) of Lemma \ref{(DL)} all but one of these copies can be removed from $G^*$ without changing the homotopy type of $I^*$, so we may assume that $G^*$ is a $6$-cycle, so $I^* \simeq S^1 \vee S^1$ and again  
          \[ I(G_{\widetilde{D}_P}) \simeq \Sigma^{2m-1} (S^1 \vee S^1) \simeq \Sigma^{2q-3} (S^1 \vee S^1) = S^{2q-2} \vee S^{2q-2},\]
    as needed. \\


\noindent (2) When $p + 1 = q = r,$ we use all $m = p$ crossings on the leftmost strand, so $G^*$ has no copies of $x$, no small-trees attached to $z$ or $z'$.  As the $G^*$ is a $4$-path, we get $I^* \simeq S^1$ by  Proposition \ref{(basic)}, and so $I(G_{\widetilde{D}_P}) \simeq \Sigma^{2m-1}S^1 \simeq \Sigma^{2q-3}S^1 = S^{2q-2} = S^{q+r-2}$.  \\


\noindent (3) When $q \leq p$ and $q < r,$ we use $m = q-1$ crossings on the leftmost strand leaving  $p-q+1$ copies of $x$, no small-trees on $z'$, and $r-q$ small-trees on $z$.  By Proposition \ref{(DL)} we can remove all but one copies of $x$ and, by Lemma \ref{(DL)}(1) as the leaf of the small-trees are dominated by $z$, we can remove $z$, without changing the homotopy type of $I^*$. Thus $G^*$ is a $4$-path (the $6$-cycle with $z$ removed) and $r-q$ independent edges. The middle vertex of the $4$-path dominates the ends of the path, so we can remove it, leaving $r - q + 2$ independent edges, so $I^* \simeq S^{r-q+1}$, and as $m = q-1$, $I(G_{\widetilde{D}_P}) \simeq \Sigma^{2q-3}S^{r-q+1} \simeq S^{q + r - 2}$, as needed. \\

\noindent (4) This case is essentially the same as case (3); switching $q$ and $r$ just switches 
the small trees from $z$ to $z'$ in $G^*$.  \\

   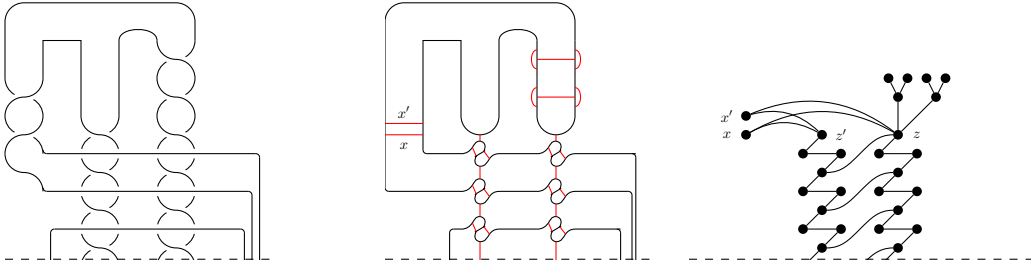
\begin{figure}
   \begin{tikzpicture}

   \begin{scope}[xshift = 0, scale=.5]
    \clipbox{0}{0.2}{7}{7}
    \draw (1,5) to (1,6) to  (2,6) to (2,4); 
    \draw (0,5) to (0,6.5) to[out = 90, in = 180] (.5,7) to (4.5,7) to[out = 0, in = 90] (5,6.5) to (5,6);
    \draw (3,4) to (3,6) to[out = 90, in = 90] (4,6);  
    
    \pCrr02\pCh034
    \nCh203\nCh405
    \old{1.2}{1}{6.2}
    \sqp{6.3}{1}{-.1}{1.6}{-.1}
    \ol{1}{2}{6.4}
    \sqp{6.5}{2}{-.1}{1.4}{-.1}
    \ol{1}{3}{6.6}
    \sqp{6.7}{3}{-.1}{1.2}{0.9}
   \end{scope}

   \begin{scope}[xshift=5cm, scale=.5]
    \clipbox{0}{0.2}{7}{7}
    \draw (1,4) to (1,6) to  (2,6) to (2,4); 
    \draw (0,4) to (0,6.5) to[out = 90, in = 180] (.5,7) to (4.5,7) to[out = 0, in = 90] (5,6.5) to (5,6);
    \draw (3,4) to (3,6) to[out = 90, in = 90] (4,6);  
    \draw (2,4) arc (180:360:.5); 
    \draw (4,4) arc (180:360:.5);

    \nsj{4}{4}\nsj{4}{5}

    \draw (0,4) -- (0,2.1); \scc{0.1}{2}{3} \draw (0.1,2) -- (2,2);
    \draw (1,4) -- (1,3.1); \scc{1.1}{3}{3} \draw (1.1,3) -- (2,3);
    \draw[red] (0,3.5) -- (1,3.5);\draw (0.5,3.2) node {\scalebox{.5}{$x$}};
    \draw[red] (0,3.8) -- (1,3.8);\draw (0.5,4.1) node {\scalebox{.5}{$x'$}};

    \foreach \y in {1, ..., 3}{\smcr{2}{\y}\smcr{4}{\y}%
     \draw (3,\y) -- (4,\y);
     \draw (5,\y) -- (6+.2*\y,\y);}
     \draw[red]  (2.5,.5) -- (2.5,0);
     \draw[red]  (4.5,.5) -- (4.5,0);
    

    \sqp{6.2}{1}{-.1}{1.6}{-.1}
    \sqp{6.5}{2}{-.1}{1.3}{-.1}
    \sqp{6.6}{3}{-.1}{1.7}{0.9}


     \scc{1.7}{.9}{2}\draw (1.8,1)--(2,1); 

   \end{scope}

   \begin{scope}[xshift=10cm, scale=.5, yshift=-3.5cm,  every node/.style = vert]
     \clipbox{-2}{3.7}{7}{10.7}

    \Verts{a7/1.5/7, d7/3.5/7, x/-.5/7, x'/-.5/7.5}          
    \draw (-1,7) node[empty] {\scalebox{0.5}{$x$}};
    \draw (-1,7.5) node[empty] {\scalebox{0.5}{$x'$}};
    \draw (4,7) node[empty] {\scalebox{0.5}{$z$}};
    \draw (2,7) node[empty] {\scalebox{0.5}{$z'$}};
    \draw (x) edge[bend  left] (a7);                
    \draw (x) edge[bend  left] (d7);                
    \draw (x') edge[bend  left] (a7);                
    \draw (x') edge[bend  left] (d7);  

    \Verts{x1/3.5/8, y1/4.5/8, x2/3.25/8.5, x3/3.75/8.5, y2/4.25/8.5, y3/4.75/8.5}
    \Edges{d7/x1, d7/y1, x1/x2, x1/x3, y1/y2, y1/y3}
    \foreach \l in {3,...,6}{                       
       \Verts{a\l/1.5/\l, b\l/1/\l+.5, c\l/2/\l+.5, d\l/3.5/\l, e\l/3/\l+.5, f\l/4/\l+.5 }
       \Edges{a\l/c\l, b\l/c\l, d\l/f\l, e\l/f\l }  
    }
    \foreach \l[evaluate=\l as  \lp using (int(\l+1))]in {3,...,6}{
       \Edges{a\lp/b\l,d\lp/e\l}  
       \draw (a\l) to[out = 0, in = -135] (2.5,\l+.5) to[out = 45, in = 180] (d\lp);
}
    
   \end{scope}
   \end{tikzpicture} 
   \caption{$\widetilde{D}_{P}, s_{A}\widetilde{D}_{P},$ and $G_{\widetilde{D}_P}$  when $(p,q,r) = (8,7,9)$}\label{A3} 
    \end{figure}  



\noindent (5) When $p<{\rm min}\{q,r\}-1,$ we use all $m = p$ crossings on the leftmost strand, so $G^*$ has no copies of $x$ and there are $q-p-1$ and $r-p-1$ small-trees attached at $z'$ and $z,$ respectively.  The vertices $z$ and $z'$ dominate the leaves of their small-trees, so we can remove them leaving $G^*$ as $(q + r - 2p - 2)$ independent edges and a $2$-path. The ends of a $2$-path have the same neighbourhoods, so we can remove one of them leaving another edge.  Therefore, $I^* \simeq S^{q+r-2p-2}$ and as $m = p$ we have 
           $I(G_{\widetilde{D}_P}) \simeq \Sigma^{2p-1} S^{q + r - 2p -2} \simeq S^{q + r - 3}$, as needed. 
\end{proof}

\begin{remark} \label{rem}
In the above proof, when both of the vertices $z$ and $z'$ have small-trees attached, and the vertex $x$ is there, the graph $G^*$ has an independent vertex and so its independence complex is contractible. We do not know how to deal with this case, and so were not able to deal with the cases where $\min\{q,r\}= p+1$ and $q \neq r$.
\end{remark}

It was shown in \cite{CS20} that if two link diagrams $D$ and $D'$ represent the same link, then the associated independence complex $I(G_{D'})$ is either contractible or has the same homotopy type as $I(G_{D})$. Therefore, Proposition \ref{main0}, Theorem \ref{main1}, and Theorem \ref{main2} directly imply the following:

\begin{corollary}

Let $(p,q,r)$ be a triple of positive integers.
\begin{enumerate}
    \item The real extreme Khovanov homology groups of the pretzel links $P(p,q,r)$, $P(-p,-q,-r)$, and $P(p,q,-r)$ are torsion-free.
    \item The real extreme Khovanov homology group of the pretzel link $P(p,-q,-r)$ is torsion-free under the condition that if $\min\{q,r\} = p+1,$ then $q = r$.
\end{enumerate}

\end{corollary}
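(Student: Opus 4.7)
The plan is to package the previous results into a single deduction. For each pretzel link listed in the corollary, we already have a specific link diagram $\widetilde{D}$ exhibited in Proposition \ref{main0}, Theorem \ref{main1}, or Theorem \ref{main2}, whose associated independence complex $I(G_{\widetilde{D}})$ is homotopy equivalent either to $S^{-1}$, to a single sphere, or to a wedge of two spheres of the same dimension. In every case the reduced (co)homology of $I(G_{\widetilde{D}})$ is a finitely generated free abelian group, so torsion-free.

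The main bridge to cross is that Theorem \ref{GMS main} relates the reduced cohomology of $I(G_{\widetilde{D}})$ to $KH^{i,j_{\min}(\widetilde{D})}(\widetilde{D})$, not directly to the real-extreme group $KH^{i,\underline{j}}(L)$. I therefore first check that for our chosen diagram $\widetilde{D}$ one actually has $j_{\min}(\widetilde{D})=\underline{j}(L)$. Because $I(G_{\widetilde{D}})$ is non-contractible (its cohomology is nontrivial in some degree), Theorem \ref{GMS main} forces $KH^{i,j_{\min}(\widetilde{D})}(\widetilde{D})$ to be nonzero for that $i$, and combined with the general inequality $j_{\min}(\widetilde{D})\leq \underline{j}(L)$ together with the result of Cantero Mor\'an--Silvero \cite{CS20} recalled just before the corollary (diagrams of the same link have Lando-graph independence complexes that are either contractible or share the same homotopy type), this pins $\underline{j}(L)=j_{\min}(\widetilde{D})$.

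With that equality in hand, I would invoke Theorem \ref{GMS main} once more to conclude
\[
KH^{i,\underline{j}}(L) \;\cong\; \widetilde{H}^{i-1+n}\bigl(I(G_{\widetilde{D}})\bigr),
\]
where $n$ is the number of negative crossings of $\widetilde{D}$. Since the right-hand side is a finite direct sum of copies of $\mathbb{Z}$ (reduced cohomology of $S^{-1}$, $S^{k}$, or $S^{k}\vee S^{k}$), the left-hand side is torsion-free. Applying this to each of the four families $P(p,q,r)$, $P(-p,-q,-r)$, $P(p,q,-r)$, and (under the stated hypothesis) $P(p,-q,-r)$, yields the two assertions of the corollary.

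The proof is essentially assembly, so there is no real obstacle; the only mildly subtle point is the identification $j_{\min}(\widetilde{D})=\underline{j}(L)$, which is precisely where \cite{CS20} is needed. Under the missing hypothesis of Theorem \ref{main2} (namely $\min\{q,r\}=p+1$ with $q\neq r$) we have no non-contractible realization available, which is why the $P(p,-q,-r)$ statement must carry that same restriction.
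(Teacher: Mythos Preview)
Your argument is correct and matches the paper's (very terse) approach: the paper simply states that Proposition~\ref{main0}, Theorem~\ref{main1}, and Theorem~\ref{main2}, together with the result of \cite{CS20}, ``directly imply'' the corollary, and you have spelled out exactly that implication. One small remark: your invocation of \cite{CS20} in the step identifying $j_{\min}(\widetilde{D})=\underline{j}(L)$ is actually unnecessary---once $I(G_{\widetilde{D}})$ is non-contractible, Theorem~\ref{GMS main} gives $KH^{i,j_{\min}(\widetilde{D})}(L)\neq 0$ for some $i$, so $\underline{j}(L)\le j_{\min}(\widetilde{D})$ by definition of $\underline{j}$, and the reverse inequality holds for any diagram; no appeal to homotopy rigidity across diagrams is required at this point.
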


This agrees with Conjecture \ref{conjecture}.

Furthermore, by applying the relationship between the extreme Khovanov homology and the simplicial homology of the associated independence complex of a given link diagram described in Theorem \ref{GMS main} to Proposition \ref{main0}, Theorem \ref{main1}, and Theorem \ref{main2}, the real-extreme Khovanov homology of certain pretzel links can be computed as follows:

\begin{corollary}
Suppose that $(p,q,r)$ is a triple of positive integers such that if $\min\{q,r\} = p+1,$ then $q = r.$ Then the real-extreme Khovanov homology of a pretzel link is
\begin{enumerate}
    \item $KH^{i,\underline{j}}(P(p, q, r)) \cong \mathbb{Z}$ when $i=-n, \underline{j}=p+q+r-3n-3;$
    \item $KH^{i,\underline{j}}(P(-p, -q, -r)) \cong \mathbb{Z}$ when $i=-n, \underline{j}=-3n+1;$
    \item $KH^{i,\underline{j}}(P(p, q, -r)) \cong \mathbb{Z}$ when $i=-n, \underline{j}=p+q-3n-1;$
    \item $KH^{i,\underline{j}}(P(p, -q, -r)) \cong
\begin{cases}
\mathbb{Z} \oplus \mathbb{Z} {\rm~when~} i=q-n, \underline{j}=p+2q-3n-1 & \text{if } p \geq q=r;\\
\mathbb{Z} {\rm~when~} i=p-n+1, \underline{j}=3p-3n+1 & \text{if } p+1=q=r;\\
\mathbb{Z} {\rm~when~} i=q-n, \underline{j}=p-3n-1+2q & \text{if } q \leq p {\rm~and~} q<r;\\
\mathbb{Z} {\rm~when~} i=r-n, \underline{j}=p-3n-1+2r & \text{if } r \leq p {\rm~and~} r<q;\\
\mathbb{Z} {\rm~when~} i=p-n, \underline{j}=3p-3n+1 & \text{if } p<{\rm min}\{q,r\}-1,\\
\end{cases}$
\end{enumerate}
where $n$ denotes the number of negative crossings in the standard link diagram of the pretzel link.

\end{corollary}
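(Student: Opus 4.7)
The plan is to construct a deformed diagram $\widetilde{D}_P$ of $P(p,-q,-r)$ via Reidemeister moves, analyze the structure of its Lando graph $G_{\widetilde{D}_P}$, and then repeatedly apply the reduction tools of Section 3 (Corollary \ref{(glue)}, Proposition \ref{(divid)}, and Lemma \ref{(DL)}) to compute $I(G_{\widetilde{D}_P})$.

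The first step is to decide how many crossings to ``unwrap'' from the leftmost strand of the standard diagram $D_P$. I would set $m = \min\{p, q-1, r-1\}$ and apply Reidemeister II moves that push $m$ successive crossings of the leftmost strand around the bottom of the diagram, producing nested concentric circles (as illustrated for the base case $p = q = r$). For the remaining $p-m$ crossings on the leftmost strand I leave them alone, and for the unused crossings on the middle and rightmost strands I apply the same Reidemeister II trick that was used in Theorem \ref{main1}, removing them up to one crossing that produces the ``small-tree'' attachments. A careful accounting of admissible $A$-chords then yields a Lando graph of the following shape: a central subgraph $G^*$ (whose precise form depends on the case), together with $m-1$ successive copies of an $8$-cycle glued along edges (a ``tail''), terminated by a $3$-path.

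Next, because $8 = 3(2) + 2$, Corollary \ref{(glue)} lets me strip each $8$-cycle off the tail at the cost of two suspensions, while Proposition \ref{(divid)}(2) strips the terminal $3$-path at the cost of one more suspension. This reduces the computation to
\[ I(G_{\widetilde{D}_P}) \;\simeq\; \Sigma^{2(m-1)+1} I(G^*) \;=\; \Sigma^{2m-1}\, I^*. \]
It then remains to identify $I^*$ in each case. In case (1), $G^*$ is a $6$-cycle with $p-q+1 > 1$ duplicate copies of a distinguished vertex $x$; by Lemma \ref{(DL)}(1) all but one copy can be removed, so $I^* \simeq S^1 \vee S^1$ by Proposition \ref{(basic)}. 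In case (2), $G^*$ is a $4$-path and $I^* \simeq S^1$. In cases (3) and (4), one of $z, z'$ dominates the leaves of its attached small-trees and can be removed, and further domination arguments collapse $G^*$ to a disjoint union of independent edges whose independence complex is a sphere of the required dimension. In case (5), both $z$ and $z'$ dominate their small-trees and can be stripped, leaving $G^*$ as a disjoint union of independent edges and a $2$-path that reduces further to another edge. Each dimension count then matches the claim.

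The main obstacle I expect is the combinatorial bookkeeping of which cases give which $G^*$, and in particular the case $\min\{q,r\} = p+1$ with $q \neq r$: here one of $z$ or $z'$ has small-trees attached while the copies of $x$ are also present, so the reductions leave $G^*$ with an isolated vertex; then $I(G^*)$ is a cone and hence contractible. This is exactly the obstruction that forces the hypothesis of the theorem, and it explains why the present construction does not extend to those triples; I would record this as the remark motivating Remark \ref{rem}.
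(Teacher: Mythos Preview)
Your proposal is essentially a re-derivation of Theorem~\ref{main2}: you construct $\widetilde{D}_P$, analyze $G_{\widetilde{D}_P}$, and compute the homotopy type of $I(G_{\widetilde{D}_P})$. But the Corollary you are asked to prove is a statement about $KH^{i,\underline{j}}$, not about $I(G_{\widetilde{D}_P})$, and the paper's proof of the Corollary \emph{assumes} Proposition~\ref{main0}, Theorem~\ref{main1}, and Theorem~\ref{main2} as already established. What the Corollary actually requires, and what is entirely missing from your proposal, is the translation step: for each deformed diagram you must count the total number of crossings $\widetilde{c}$, the number of negative crossings $\widetilde{n}$, and $|s_A\widetilde{D}_P|$, then apply Corollary~\ref{GMS cor} to obtain $j_{\min}=\widetilde{c}-3\widetilde{n}-|s_A\widetilde{D}_P|$, and apply Theorem~\ref{GMS main} in the form $KH^{i,j_{\min}}(\widetilde{D}_P)\cong \widetilde{H}^{\,i-1+\widetilde{n}}(I(G_{\widetilde{D}_P}))$ to solve for the unique $i$ at which the group is nonzero. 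Without this bookkeeping you have no way to produce the specific values of $i$ and $\underline{j}$ listed in the statement.

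Moreover, your proposal addresses only the pretzel $P(p,-q,-r)$ and says nothing about parts (1)--(3). The paper handles (1) and (2) by noting that for the alternating pretzels the standard diagram already has empty Lando graph, so $I(G_{D_P})\simeq S^{-1}$, and then computes $|s_A D_P|$ directly ($3$ and $p+q+r-1$ respectively). Part (3) uses the diagram from Theorem~\ref{main1}, with $\widetilde{c}=p+2+q+r+2(r-1)$, $\widetilde{n}=n+r$, and $|s_A\widetilde{D}_P|=1$. For part (4) the paper carefully tracks that each ``unwrapping'' move on the leftmost strand adds three crossings of which exactly one is negative, and each Reidemeister~II applied to the middle or rightmost strand adds two crossings of which one is negative; these counts are what drive the formulas for $i$ and $\underline{j}$. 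Your plan never engages with any of this, so as a proof of the Corollary it has a genuine gap.
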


\begin{proof}

\noindent (1) Let $P$ be the pretzel link $P(p, q, r)$, and $D_P$ be the standard link diagram of $P.$ Then the corresponding Lando graph $G_{D_P}$ is the empty graph, i.e., $I(G_{D_P}) \simeq S^{-1}.$ \\
By Theorem \ref{GMS main}, we have 
\begin{align*} 
KH^{i,j_{\rm min}}(D_P) & \cong {\widetilde{H}}^{i-1+n}({{I}({G_{D_P}})})\\ 
    	              & \cong {\widetilde{H}}^{i-1+n}(S^{-1})\\ 
                        & \cong \begin{cases}
\mathbb{Z} & \text{if } i=-n;\\
0 & \text{otherwise.}\\
\end{cases}
\end{align*} 
Since $c=p+q+r$ and $|s_{A}D_P|=3,$ $j_{\rm min}= p+q+r-3n-3$ by Corollary \ref{GMS cor}.\\
Thus, $KH^{i,\underline{j}}(P(p, q, r)) \cong \mathbb{Z}$ when $i=-n, \underline{j}=p+q+r-3n-3,$ as desired.
\\

\noindent (2) 
Let $D_P$ be the standard link diagram of $P=P(-p, -q, -r).$ Then again $G_{D_P}$ is the empty graph, i.e., $I(G_{D_P}) \simeq S^{-1}.$\\
By Theorem \ref{GMS main},
\begin{align*} 
KH^{i,j_{\rm min}}(D_P) & \cong {\widetilde{H}}^{i-1+n}(S^{-1})\\ 
                        & \cong \begin{cases}
\mathbb{Z} & \text{if } i=-n;\\
0 & \text{otherwise.}\\
\end{cases}
\end{align*} 
Since $c=p+q+r$ and $|s_{A}D_P|=p+q+r-1,$ $j_{\rm min}= p+q+r-3n-(p+q+r-1)=-3n+1$ by Corollary \ref{GMS cor}.\\
Hence, $KH^{i,\underline{j}}(P(-p, -q, -r)) \cong \mathbb{Z}$ when $i=-n, \underline{j}=-3n+1.$
\\

\noindent (3) 
Let $\widetilde{D}_{P}$ denote the link diagram of $P(p, q, -r)$ obtained in Theorem \ref{main1}. 
Let $\widetilde{c}$ and $\widetilde{n}$ be the number of crossings and the number of negative crossings of $\widetilde{D}_{P}$, respectively.
Note that $\widetilde{D}_{P}$ contains $r$ additional negative crossings compared to the standard link diagram of $P(p, q, -r)$ and $I(G_{\widetilde{D}_{P}}) \simeq S^{r-1}.$
Thus, by Theorem \ref{GMS main}, we have 
\begin{align*} 
KH^{i,j_{\rm min}}(\widetilde{D}_{P}) & \cong {\widetilde{H}}^{i-1+\widetilde{n}}({{I}({G_{\widetilde{D}_{P}}})})\\ 
    	              & \cong {\widetilde{H}}^{i-1+n+r}(S^{r-1})\\ 
                        & \cong \begin{cases}
\mathbb{Z} & \text{if } i= -n;\\
0 & \text{otherwise.}\\
\end{cases}
\end{align*} 
Since $\widetilde{c}=p+2+q+r+2(r-1)$ and $|s_{A}\widetilde{D}_P|=1,$ $j_{\rm min}=\widetilde{c} - 3\widetilde{n} - |s_{A}\widetilde{D}_P| = \{p+2+q+r+2(r-1)\} -3(n+r)-1= p+q-3n-1$ by Corollary \ref{GMS cor}. \\
Thus, $KH^{i,\underline{j}}(P(p, q, -r)) \cong \mathbb{Z}$ when $i=-n, \underline{j}=p+q-3n-1.$
\\

\noindent (4) 
Let $\widetilde{D}_{P}$ denote the link diagram of $P(p, -q, -r)$ obtained in Theorem \ref{main2}. 
Let $\widetilde{c}$ and $\widetilde{n}$ be the number of crossings and the number of negative crossings of $\widetilde{D}_{P}$, respectively.
Observe that when we undo the bottom crossing in the leftmost strand of the standard link diagram of $P(p, -q, -r)$ by crossing it over the other two strands, this process increases the number of crossings by three, including one negative crossing. To be more specific, as shown in Figure \ref{pretzel}, since $u_i$'s in $D_P$ and $\widetilde{u}_i$'s in $\widetilde{D}_{P}$ are essentially the same crossing, three additional crossings appear in each process: one crossing indicated by a dotted circle and two shaded crossings. The crossings indicated by the dotted circles are self-crossings and their signs are always positive. The two shaded crossings on the same level have opposite signs, that is, one is positive and the other is negative.

\begin{figure}[ht!]
\centering
\includegraphics[width=90mm]{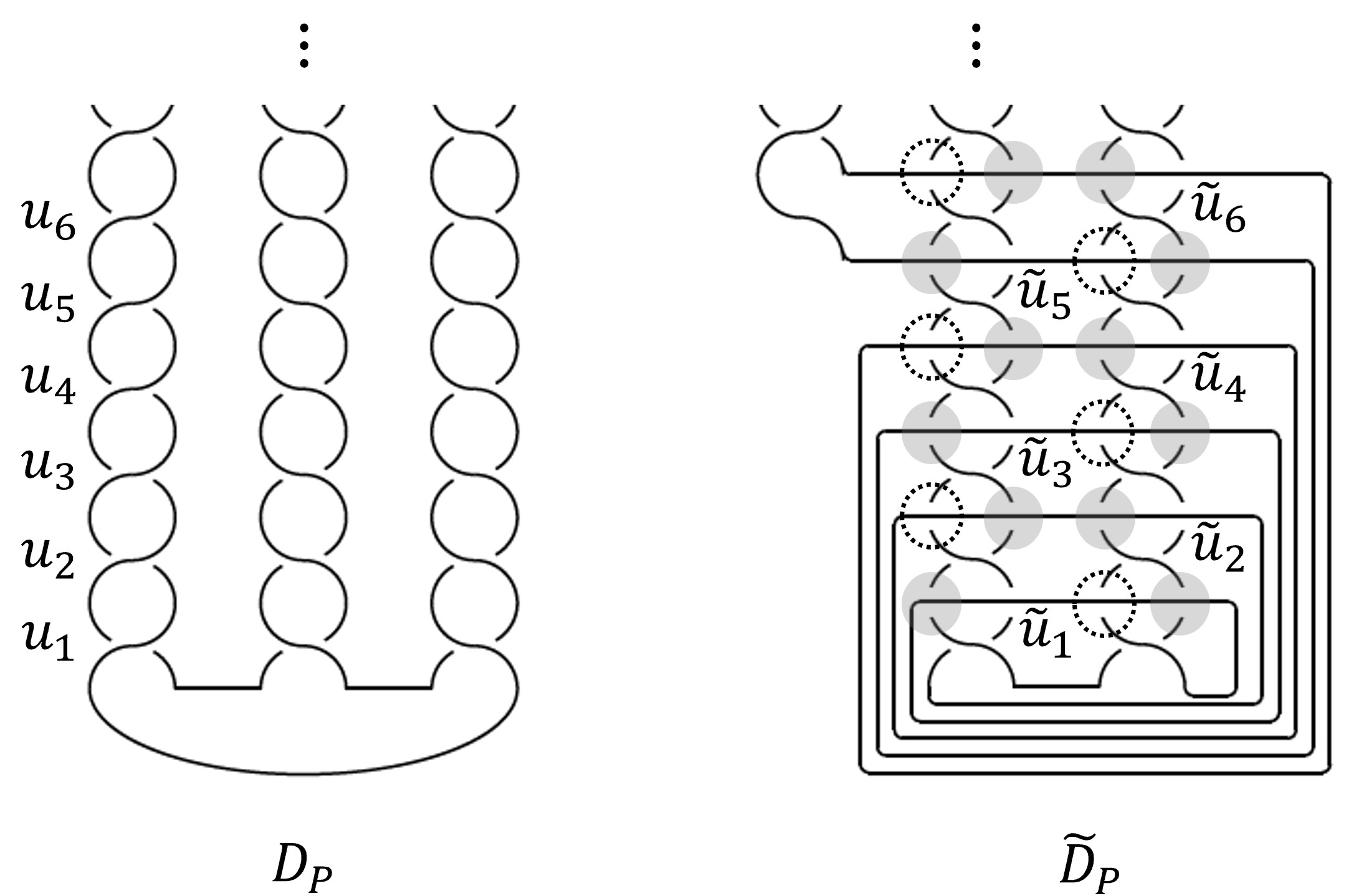}
\caption {The changes in the number of crossings when transforming ${D}_{P}$ to $\widetilde{D}_{P}$}\label{pretzel}
\end{figure}

\begin{enumerate}[label=\roman*.]
    \item When $p \geq q = r,$ to deform the standard link diagram of $P(p, -q, -r)$ into $\widetilde{D}_{P},$ we use $q-1$ crossings on the leftmost strand. Then $\widetilde{D}_{P}$ contains $3q-3$ additional crossings, of which $q-1$ are negative crossings, compared to the standard link diagram of $P(p, -q, -r).$ Note that $I(G_{\widetilde{D}_{P}}) \simeq S^{2q-2} \vee S^{2q-2}$ by Theorem \ref{main2}. 
    Thus, by Theorem \ref{GMS main}, we have 
\begin{align*} 
KH^{i,j_{\rm min}}(\widetilde{D}_{P}) & \cong {\widetilde{H}}^{i-1+\widetilde{n}}({{I}({G_{\widetilde{D}_{P}}})})\\ 
    	              & \cong {\widetilde{H}}^{i-1+n+q-1}(S^{2q-2} \vee S^{2q-2})\\ 
                        & \cong \begin{cases}
\mathbb{Z} \oplus \mathbb{Z} & \text{if } i= q-n;\\
0 & \text{otherwise.}\\
\end{cases}
\end{align*}
Since $\widetilde{c}=p+q+r+3q-3$ and $|s_{A}\widetilde{D}_P|=1,$ $j_{\rm min}=\widetilde{c} - 3\widetilde{n} - |s_{A}\widetilde{D}_P|=( p+q+r+3q-3) -3(n+q-1)-1= p+q+r-3n-1$ by Corollary \ref{GMS cor}. \\
Thus, $KH^{i,\underline{j}}(P(p, q, -r)) \cong \mathbb{Z} \oplus \mathbb{Z}$ when $i=q-n$ and $\underline{j}=p+2q-3n-1.$\\

    \item When $p + 1 = q = r,$ we use all $p$ crossings on the leftmost strand of the standard link diagram. Then $\widetilde{D}_{P}$ contains $3p$ additional crossings, of which $p$ are negative crossings, compared to the standard link diagram of $P(p, -q, -r).$ Note that $I(G_{\widetilde{D}_{P}}) \simeq S^{q+r-2}$ by Theorem \ref{main2}. By Theorem \ref{GMS main},  
\begin{align*} 
KH^{i,j_{\rm min}}(\widetilde{D}_{P}) & \cong {\widetilde{H}}^{i-1+\widetilde{n}}({{I}({G_{\widetilde{D}_{P}}})})\\ 
    	              & \cong {\widetilde{H}}^{i-1+n+p}(S^{q+r-2})\\ 
                        & \cong \begin{cases}
\mathbb{Z}  & \text{if } i= p+1-n;\\
0 & \text{otherwise.}\\
\end{cases}
\end{align*}
Since $\widetilde{c}=p+q+r+3p$ and $|s_{A}\widetilde{D}_P|=1,$ $j_{\rm min}= (p+q+r+3p) -3(n+p)-1= p+q+r-3n-1$ by Corollary \ref{GMS cor}. \\
Thus, $KH^{i,\underline{j}}(P(p, q, -r)) \cong \mathbb{Z}$ when $i=p+1-n$ and $\underline{j}=3p-3n+1.$\\

    \item When $q \leq p$ and $q < r,$ we use $q-1$ crossings on the leftmost strand of the standard link diagram. Furthermore, we apply the 2nd Reidemeister move to each of the $r-q$ crossings above the concentric circles on the rightmost strand.    
    Thus, $\widetilde{D}_{P}$ contains $3(q-1)+2(r-q)$ additional crossings, of which $(q-1)+(r-q)$ are negative crossings, compared to the standard link diagram of $P(p, -q, -r).$ Note that $I(G_{\widetilde{D}_{P}}) \simeq S^{q+r-2}$ by Theorem \ref{main2}. By Theorem \ref{GMS main},  
\begin{align*} 
KH^{i,j_{\rm min}}(\widetilde{D}_{P}) & \cong {\widetilde{H}}^{i-1+\widetilde{n}}({{I}({G_{\widetilde{D}_{P}}})})\\ 
    	              & \cong {\widetilde{H}}^{i+n+r-2}(S^{q+r-2})\\ 
                        & \cong \begin{cases}
\mathbb{Z}  & \text{if } i= q-n;\\
0 & \text{otherwise.}\\
\end{cases}
\end{align*}
Since $\widetilde{c}=p+q+r+3(q-1)+2(r-q)$ and $|s_{A}\widetilde{D}_P|=1,$ $j_{\rm min}= \{p+q+r+3(q-1)+2(r-q)\} -3\{n+(q-1)+(r-q)\}-1= p+2q-3n-1$ by Corollary \ref{GMS cor}. \\
Thus, $KH^{i,\underline{j}}(P(p, q, -r)) \cong \mathbb{Z}$ when $i=q-n$ and $\underline{j}=p+2q-3n-1.$\\

    \item This case is essentially the same as Case iii; we only need to switch $q$ and $r.$\\

    \item When $p<{\rm min}\{q,r\}-1,$ we use all $p$ crossings on the leftmost strand of the standard link diagram. Furthermore, we apply the 2nd Reidemeister move to each of the $q-p-1$ and $r-p-1$ crossings above the concentric circles on the middle and rightmost strands, respectively.
    Thus, $\widetilde{D}_{P}$ contains $3p+2(q-p-1)+2(r-p-1)$ additional crossings, of which $p+(q-p-1)+(r-p-1)$ are negative crossings, compared to the standard link diagram of $P(p, -q, -r).$ Note that $I(G_{\widetilde{D}_{P}}) \simeq S^{q+r-3}$ by Theorem \ref{main2}. Then by Theorem \ref{GMS main}, we have
\begin{align*} 
KH^{i,j_{\rm min}}(\widetilde{D}_{P}) & \cong {\widetilde{H}}^{i-1+\widetilde{n}}({{I}({G_{\widetilde{D}_{P}}})})\\ 
    	              & \cong {\widetilde{H}}^{i+n+q+r-p-3}(S^{q+r-3})\\ 
                        & \cong \begin{cases}
\mathbb{Z}  & \text{if } i= p-n;\\
0 & \text{otherwise.}\\
\end{cases}
\end{align*}
Since $\widetilde{c}=p+q+r+3p+2(q-p-1)+2(r-p-1)$ and $|s_{A}\widetilde{D}_P|=1,$ $j_{\rm min}=\{p+q+r+3p+2(q-p-1)+2(r-p-1)\} -3\{n+p+(q-p-1)+(r-p-1)\}-1= 3p-3n+1$ by Corollary \ref{GMS cor}. \\
Therefore, $KH^{i,\underline{j}}(P(p, q, -r)) \cong \mathbb{Z}$ when $i=p-n$ and $\underline{j}=3p-3n+1,$ as desired.
\end{enumerate}

\end{proof}

\section*{Acknowledgement}
The paper grew out of the master thesis of the fourth author at the Kyungpook National University. The work of Seung Yeop Yang was supported by the National Research Foundation of Korea (NRF) grant funded by the Korean government (MSIT)(No. 2022R1A5A1033624) and by Global - Learning \& Academic research institution for Master’s·PhD students, and Postdocs (LAMP) Program of the National Research Foundation of Korea (NRF) grant funded by the Ministry of Education (No. RS-2023-00301914). The work of Mark Siggers was supported by a National Research Foundation of Korea (NRF-2022R1A2C1091566) grant funded by the Ministry of Education.

\end{document}